\newcommand{\field}[1]{\mathbb{#1}}
\newcommand{\A}{\field{A}}
\newcommand{\C}{\field{C}}
\newcommand{\G}{\field{G}}
\newcommand{\N}{\field{N}}
\newcommand{\PP}{\field{P}}
\newcommand{\Q}{\field{Q}}
\newcommand{\V}{\field{V}}
\newcommand{\Z}{\field{Z}}
\newcommand{\krn}{{\rm ker}\,}
\theoremstyle{plain}
\newtheorem{theorem}{Theorem}[section]
\newtheorem{proposition}[theorem]{Proposition}
\newtheorem{lemma}[theorem]{Lemma}
\newtheorem{corollary}[theorem]{Corollary}
\newtheorem{definition}[theorem]{Definition}
\newtheorem{question}[theorem]{Question}
\theoremstyle{definition}
\theoremstyle{remark}
\begin{document}

\makeatletter	   
\makeatother     

\title{a note on smooth affine $SL_2$-surfaces}
\author{G. Freudenburg}
\date{\today}
\subjclass[2010]{13A50, 14P25, 14R20}
\keywords{$SL_2$-action, reductive group action, ML-surface, cancellation problem}

\begin{abstract} Working over a field $k$ of characteristic zero, we study the ring $\mathfrak{R}=\mathfrak{D}^{\Z_2}$ where 
$\mathfrak{D}=k[x_0,x_1,x_2]/(2x_0x_2-x_1^2-1)$ and $\Z_2$ acts by $x_i\to -x_i$. $\mathfrak{D}$ admits an algebraic $SL_2(k)$-action which restricts to $\mathfrak{R}$. 
Our results include the following. 
(1) If $k$ is algebraically closed, the smooth $SL_2$-surface $X={\rm Spec}(\mathfrak{R})$ admits an algebraic embedding in $\A_k^4$, and for any such embedding
the $SL_2(k)$-action on $X$ does not extend to $\A_k^4$. 
In addition, there is no algebraic embedding of $X$ in $\A_k^3$. 
(2) The automorphism group ${\rm Aut}_k(\mathfrak{R})$ acts transitively on the set of irreducible locally nilpotent derivations of $\mathfrak{R}$. 
(3) Every automorphism of $\mathfrak{R}$ extends to $\mathfrak{D}$, and ${\rm Aut}_k(\mathfrak{R})=PSL_2(k)\ast_H\mathcal{T}$ where $\mathcal{T}$ is its triangular subgroup.
(4) $\mathfrak{R}$ is non-cancellative, i.e., there exists a ring $\tilde{\mathfrak{R}}$ such that $\mathfrak{R}^{[1]}\cong_k\tilde{\mathfrak{R}}^{[1]}$ but
$\mathfrak{R}\not\cong_k\tilde{\mathfrak{R}}$. In order to distinguish $\mathfrak{R}$ from $\tilde{\mathfrak{R}}$, we calculate
the plinth invariant for $\mathfrak{R}$. 
\end{abstract}

\maketitle

\section{Introduction}
Over an algebraically closed field $k$ of characteristic zero, there are exactly three smooth affine $SL_2$-surfaces with trivial units over $k$, namely,
\begin{enumerate}
\item the plane $\A_k^2$
\item the quadric surface $Y={\rm Spec}(\mathfrak{D})$ where $\mathfrak{D}=k[x_0,x_1,x_2]$ and $2x_0x_2-x_1^2=1$, and
\item the quotient $X=Y/\Z_2={\rm Spec}\left( \mathfrak{D}^{\langle\mu\rangle}\right)$ where $\langle\mu\rangle\cong\Z_2$ acts on $\mathfrak{D}$ by $\mu (x_i)=-x_i$.
\end{enumerate}
This was shown by Popov in \cite{Popov.73a,Popov.73b}.
Both $X$ and $Y$ are homogeneous spaces for $SL_2(k)$, namely, 
$Y\cong_kSL_2(k)/T$ and $X\cong_kSL_2(k)/N$ where $T$ is the one-dimensional torus and $N$ is its normalizer; see \cite{Popov.92}, 6.1, Theorem 2. 
These surfaces can also be described by $Y=\PP^1\times\PP^1\setminus\Delta$ where $\Delta$ is the diagonal, and
$X=\PP^2\setminus Q$ where $Q$ is a quadric curve. 

The surfaces $\A_k^2$ and $Y$ have been studied extensively. The purpose of this note is to investigate $X$. 
Despite its importance, the literature about this surface is rather scant, 
apart from that on general families to which it belongs, e.g., Gizatullin surfaces or homogeneous spaces. 
A brief description of known results is given at the end of the {\it Introduction}. 
Our main results are laid out in the next five theorems, in which $k$ is any field of characteristic zero and $\bar{k}$ is its algebraic closure. 

Observe that the $SL_2(k)$-action on $\A_k^3$ induced by the irreducible $SL_2$-module of dimension $3$ restricts to $Y\subset\A_k^3$. 
In the following theorem, embeddings are understood to be closed embeddings. 

\begin{theorem}\label{non-extend}  Assume that $k=\bar{k}$. 
Let $X$ be equipped with the $SL_2(k)$-action defined by its structure as a homogeneous space for $SL_2(k)$, 
let $\A_k^5$ be equipped with the $SL_2(k)$-action induced by the irreducible $SL_2$-module of dimension $5$,  
and let $\A_k^4$ be equipped with any algebraic $SL_2(k)$-action. 
\begin{itemize}
\item [{\bf (a)}] There exists an equivariant algebraic embedding $X\to\A_k^5$.
\item [{\bf (b)}] There exists an algebraic embedding $X\to\A_k^4$.
\item [{\bf (c)}] There is no equivariant algebraic embedding $X\to\A_k^4$. 
\item [{\bf (d)}] There is no algebraic embedding $X\to\A_k^3$.
\item [{\bf (e)}] The cylinders $X\times\A_k^1$ and $Y\times\A_k^1$ are not isomorphic as algebraic $k$-varieties. 
\end{itemize}
\end{theorem}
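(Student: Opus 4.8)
The plan is to treat the five assertions through a common lens: the identification of $X$ with the single $SL_2(k)$-orbit $SL_2(k)/N$, together with the computation $\mathrm{Pic}(X)\cong\Z/2\Z$. I work throughout with the coordinate ring $\mathfrak{R}=\mathfrak{D}^{\Z_2}$, which is generated by the six quadratic monomials $x_ix_j$ (every even monomial being a product of degree-two ones). Writing $V_n$ for the irreducible $SL_2$-module of dimension $n$, the degree-two part is $\mathrm{Sym}^2(V_3)=V_5\oplus V_1$, with the trivial summand $V_1$ spanned by the invariant form $2x_0x_2-x_1^2$, which equals $1$ in $\mathfrak{R}$. Hence a basis of the $V_5$-summand consists of five functions that generate $\mathfrak{R}$ and define an equivariant closed embedding $X\to V_5=\A_k^5$, giving (a).

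For (b) the point is that $\mathfrak{R}=k[x_0^2,\,x_2^2,\,x_0x_1,\,x_1x_2]$ is generated by only four elements: since $2x_0x_2-x_1^2=1$ one has
\[
x_0x_2=2(x_0^2)(x_2^2)-(x_0x_1)(x_1x_2),
\]
so that $x_0x_2$, and then $x_1^2=2x_0x_2-1$, lie in the subring generated by these four monomials, whence all six quadratic generators do. The corresponding surjection $k[\A_k^4]\to\mathfrak{R}$ is a closed embedding $X\to\A_k^4$; consistently with (c), the four chosen functions do not span an $SL_2$-submodule.

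For (c) — which I expect to be the main obstacle — I note that $X=SL_2(k)/N$ is a single orbit, so any equivariant closed embedding realizes $X$ as a closed orbit in $\A_k^4$. I would then reduce to a linear action; this is the crucial and most delicate step. Granting it (e.g. from a linearization theorem for $SL_2(k)$ on $\A_k^4$, whose availability is the thing to pin down), $\A_k^4$ becomes a four-dimensional $SL_2$-module and the problem reduces to a finite check: whether any of $V_4$, $V_3\oplus V_1$, $V_2\oplus V_2$, $V_2\oplus 2V_1$, $4V_1$ contains an orbit isomorphic to $SL_2(k)/N$. A point whose stabilizer contains the torus $T$ must be a weight-zero vector; $V_4$ and $V_2\oplus V_2$ have none, and in $V_2\oplus 2V_1$ and $4V_1$ every weight-zero vector is $SL_2$-fixed. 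In $V_3\oplus V_1$ the weight-zero space is spanned by the Cartan vector $H$ and the trivial generator, and the Weyl representative acts by $H\mapsto -H$, so a weight-zero vector is either $SL_2$-fixed or has stabilizer exactly $T$ (an orbit $\cong Y$), never $N$. Thus no such module contains $X$, and no equivariant embedding exists. The natural tool for controlling the reduction, if a direct linearization result is unavailable, is Luna's slice theorem at the closed orbit, which models a neighborhood of $X$ as $SL_2\times_N W$ for a two-dimensional slice $N$-module $W$.

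Finally, (d) and (e) rest on the Picard group. For (d) I use the adjunction obstruction: a smooth closed surface in $\A_k^3$ is a principal hypersurface and so has trivial canonical sheaf. But $X=\PP^2\setminus Q$ with $Q$ a smooth conic gives $\mathrm{Pic}(X)=\mathrm{Pic}(\PP^2)/\langle[Q]\rangle=\Z/2\Z$, with $\omega_X=\mathcal{O}(-3)|_X$ the nontrivial class; hence $\omega_X\not\cong\mathcal{O}_X$ and no embedding $X\to\A_k^3$ can exist. For (e) I compute $\mathrm{Pic}(Y)=\Z$ from $Y=\PP^1\times\PP^1\setminus\Delta$ (as $[\Delta]=(1,1)$), contrast it with $\mathrm{Pic}(X)=\Z/2\Z$, and invoke the homotopy invariance $\mathrm{Pic}(V\times\A_k^1)\cong\mathrm{Pic}(V)$ for smooth $V$ to conclude $\mathrm{Pic}(X\times\A_k^1)=\Z/2\Z\neq\Z=\mathrm{Pic}(Y\times\A_k^1)$, so the two cylinders are not isomorphic.
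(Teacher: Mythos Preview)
Your proposal is correct, and for parts (a), (b), and (e) it matches the paper's argument essentially verbatim: the paper produces the equivariant surjection $k[\V_4(k)]\to\mathfrak{R}$ from the orbit of $x_0^2$ (your $\mathrm{Sym}^2(V_3)=V_5\oplus V_1$ decomposition is the same observation), cites the four-element generating set from Section~\ref{4-gen} for (b), and uses ${\rm Cl}(Y)\cong\Z$ versus ${\rm Cl}(X)\cong\Z_2$ together with invariance under $\times\A_k^1$ for (e).

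Parts (c) and (d) take genuinely different routes from the paper. For (c), you are right that Panyushev's linearization for $SL_2$ on $\A_k^4$ is exactly the input needed; the paper cites it directly. After linearizing, however, the paper does not argue via stabilizers of weight-zero vectors. Instead it works on the ring side: for each of the four nontrivial four-dimensional $SL_2$-modules it computes the $\N$-graded kernel $\mathcal{A}=\krn\mathcal{D}$ of the ``down'' derivation and checks that an equivariant surjection $\psi$ to $\mathfrak{R}$ would force $\psi(\mathcal{A})\subseteq k$, contradicting $\krn\delta=k[x_0^2]$. Your orbit--stabilizer analysis is shorter and more geometric; the paper's argument stays within its LND/fundamental-pair framework and reuses the graded structure of $\krn\delta$ already established. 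Both are clean once Panyushev is granted.

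For (d) the difference is more striking. The paper invokes the Bandman--Makar-Limanov characterization (a smooth affine surface with trivial Makar-Limanov invariant is a hypersurface in $\A_k^3$ iff it has the form $xy=p(z)$), together with an earlier result that $X$ is not of this form. Your adjunction argument---$\omega_X=\mathcal{O}(-3)\vert_X$ is the nontrivial class in ${\rm Pic}(X)\cong\Z/2\Z$, while any smooth hypersurface in $\A_k^3$ has trivial canonical bundle---is more elementary and self-contained, avoiding both external references. It is a nice alternative that the paper does not use.
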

The proof of part (c) relies on the theorem of Panyushev \cite{Panyushev.84}, which asserts that, when $k$ is algebraically closed, 
every algebraic action of $SL_2(k)$ on $\A_k^4$ is induced by an $SL_2$-module. 
The proofs of parts (d) and (e) rely on results of Bandman and Makar-Limanov \cite{Bandman.Makar-Limanov.01}.

For an affine $k$-domain $B$, ${\rm LND}(B)$ is the set of locally nilpotent derivations of $B$, and ${\rm Aut}_k(B)$ is the group of $k$-algebra automorphisms of $B$. 
Given $D\in {\rm LND}(B)$, $\krn D$ is its kernel and ${\rm pl}(D)=\krn D\cap DB$ is its plinth ideal. The Makar-Limanov invariant of $B$ or ${\rm Spec}(B)$ is:
\[
 ML(B)=\bigcap_{D\in{\rm LND}(B)}\krn D
 \] 
Let $\mathcal{C}(k)$ denote the class of rings $B$ which satisfy:
\begin{itemize}
\item [(a)] $B$ is a normal affine $k$-domain, $\dim B=2$, $B^*=k^*$ and $ML(B)\ne B$.
\item [(b)] $\krn D\cong_kk^{[1]}$ for every nonzero $D\in {\rm LND}(B)$. 
\end{itemize}
Conditions (a) and (b) imply condition
\begin{itemize}
\item [${\rm (b)}^{\prime}$] $B$ is rational
\end{itemize}
and if $k=\bar{k}$, then conditions (a) and ${\rm (b)}^{\prime}$ imply condition (b); see {\it Section\,\ref{last}}. 
Partition $\mathcal{C}(k)$ as $\mathcal{C}_0(k)\cup\mathcal{C}_1(k)$ where $ML(B)=k$ for $B\in\mathcal{C}_0(k)$ and $ML(B)=k^{[1]}$ for $B\in\mathcal{C}_1(k)$. 
The rings $k^{[2]}$, $\mathfrak{D}$ and $\mathfrak{D}^{\langle\mu\rangle}$ belong to $\mathcal{C}_0(k)$. 

For $B\in\mathcal{C}(k)$ other than $k^{[2]}$, the isomorphism class of $\krn D/{\rm pl}(D)$ as a $k$-algebra is an invariant of the conjugacy class of $D\in {\rm LND}(B)$. 
Since ${\rm pl}(D)\notin\{ (0),(1)\}$, this is a zero-dimensional ring, and when $k=\bar{k}$, ${\rm Spec}(\krn D/{\rm pl}(D))$ is a weighted configuration of points in $\A_k^1$. 
We make the following definitions.
\begin{itemize}
\item ${\rm KLND}(B)=\{ \krn D\, |\, D\in{\rm LND}(B)\, ,\, D\ne 0\}$
\item ${\rm ILND}(B)=\{ D\in{\rm LND}(B)\, |\, D\,\, \text{is\, irreducible}\}$
\item $B$ has the {\bf transitivity property} if the action of ${\rm Aut}_k(B)$ on ${\rm KLND}(B)$ is transitive.
\item $B$ has the {\bf strong transitivity property} if the action of ${\rm Aut}_k(B)$ on ${\rm ILND}(B)$ is transitive.
\item Assume that $B$ has the strong transitivity property and $B\not\cong_kk^{[2]}$. The {\bf plinth invariant} of $B$ is the isomorphism class of $\krn D/{\rm pl}(D)$, $D\in{\rm ILND}(B)$.
\end{itemize}
{\it Lemma\,\ref{TP-STP}} shows that, when $k=\bar{k}$ and $B$ admits a nontrivial $\Z$-grading, the transitivity property implies the strong transitivity property. 
Note that every $B\in\mathcal{C}_1(k)$ satisfies the transitivity property, since there is only one element in ${\rm KLND}(B)$.

Ebey \cite{Ebey.62} showed that $k^{[2]}$ has the strong transitivity property. 
Daigle \cite{Daigle.03a} showed that $\mathfrak{D}$ has the transitivity property, and from this it is easy to show that $\mathfrak{D}$ has the strong transitivity property
and the plinth invariant of $\mathfrak{D}$ is $k$. 

Hereafter, we let $\mathfrak{R}=\mathfrak{D}^{\langle\mu\rangle}$.  

\begin{theorem}\label{transitive}
$\mathfrak{R}$ has the strong transitivity property and its plinth invariant is $k$. 
\end{theorem}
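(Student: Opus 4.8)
The plan is to transport Daigle's transitivity theorem for $\mathfrak{D}$ across the free quotient $Y\to X$ and then promote it to strong transitivity using a torus grading. The starting point is that $\mu$ acts \emph{freely} on $Y={\rm Spec}(\mathfrak{D})$: a fixed point would force $x_0=x_1=x_2=0$, which is incompatible with $2x_0x_2-x_1^2=1$. Hence $Y\to X$ is a finite \'etale double cover, $\mathfrak{D}$ is the integral closure of $\mathfrak{R}$ in ${\rm Frac}(\mathfrak{D})$, and $\mathfrak{D}=\mathfrak{R}\oplus\mathfrak{M}$, where $\mathfrak{M}$ is the $(-1)$-eigenspace of $\mu$. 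I would first show that restriction defines a bijection between ${\rm LND}(\mathfrak{R})$ and $\{D\in{\rm LND}(\mathfrak{D}): \mu D=D\mu\}$, with $\krn D\cap\mathfrak{R}=(\krn D)^{\mu}$ corresponding to $\krn(D|_{\mathfrak{R}})$. Indeed, any $\delta\in{\rm LND}(\mathfrak{R})$ extends uniquely to a derivation $D$ of $\mathfrak{D}$ by \'etaleness, $D$ preserves $\mathfrak{D}$ because derivations preserve integral closures, and $D$ commutes with $\mu$ by uniqueness of the extension; local nilpotence of $D$ may be checked after base change to $\bar k$, where the $\mathbb{G}_a$-action attached to $\delta$ lifts through the cover since $\A^1$ is simply connected in characteristic zero.

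Next I would determine the shape of these kernels. Let $D$ be irreducible with $\mu D=D\mu$ and $\krn D=k[f]$. Since $\mu$ stabilizes $k[f]\cong k^{[1]}$ and has order two, after a translation I may assume $\mu(f)=f$ or $\mu(f)=-f$. The case $\mu(f)=f$ is impossible: then $f\in\mathfrak{R}$, so $\mu$ preserves every fiber of the $\A^1$-fibration $f\colon Y\to\A^1$ and restricts on the generic fiber to an involution of $\A^1$ over $k(f)$; every such involution has a fixed point, contradicting the freeness of $\mu$. Therefore $\mu(f)=-f$, so $f\in\mathfrak{M}$, $f^2\in\mathfrak{R}$, and $\krn(D|_{\mathfrak{R}})=k[f]\cap\mathfrak{R}=k[f^2]$. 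Thus ${\rm KLND}(\mathfrak{R})$ consists exactly of the rings $k[f^2]$ with $f$ an anti-invariant variable of $\mathfrak{D}$.

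I would then reduce strong transitivity to the transitivity property. The maximal torus $T\subset SL_2$ acts on $\mathfrak{D}$ by $x_0\mapsto\lambda^2x_0$, $x_1\mapsto x_1$, $x_2\mapsto\lambda^{-2}x_2$; this commutes with $\mu$, descends to $X$, and gives $\mathfrak{R}$ a nontrivial $\Z$-grading in which $x_0^2,x_0x_1,x_1^2,x_1x_2,x_2^2$ have weights $4,2,0,-2,-4$. Working over $k=\bar k$, {\it Lemma \ref{TP-STP}} then reduces the strong transitivity property for $\mathfrak{R}$ to its transitivity property. Proving the latter is the main obstacle. By the correspondence above it amounts to a $\mu$-equivariant refinement of Daigle's theorem: given anti-invariant variables $f,f'$ of $\mathfrak{D}$, I must produce $\alpha\in{\rm Aut}_k(\mathfrak{D})$ with $\alpha\mu=\mu\alpha$ and $\alpha(k[f])=k[f']$, for then $\alpha$ descends to an automorphism of $\mathfrak{R}$ carrying $k[f^2]$ to $k[f'^2]$. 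Daigle \cite{Daigle.03a} supplies some $\alpha_0$ with $\alpha_0(k[f])=k[x_0]$, but there is no reason for $\alpha_0$ to commute with $\mu$. My plan is to re-run the normalization underlying Daigle's theorem using only $\mu$-commuting building blocks --- the descended copy of $PSL_2(k)$ and the exponentials of $\mu$-commuting locally nilpotent derivations --- and to verify, filtering by the $T$-grading, that because every kernel occurring in the process is $\mu$-stable, each elementary step driving $f$ toward $x_0$ can be chosen $\mu$-equivariantly. Showing that this equivariant reduction terminates is the technical heart of the argument.

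Finally, with strong transitivity in hand, I would compute the plinth invariant on the standard derivation $\delta=D_0|_{\mathfrak{R}}$, where $D_0$ is given by $D_0(x_0)=0$, $D_0(x_1)=x_0$, $D_0(x_2)=x_1$. Here $\krn\delta=k[x_0^2]$ and $\delta(x_0x_1)=x_0^2$, so $x_0^2\in{\rm pl}(\delta)=\krn\delta\cap\delta(\mathfrak{R})$; as ${\rm pl}(\delta)$ is a proper nonzero ideal of $k[x_0^2]$ containing the maximal ideal $(x_0^2)$, it equals $(x_0^2)$ and $\krn\delta/{\rm pl}(\delta)\cong_k k$. A short check that $\delta$ is irreducible --- for instance $\delta(x_0x_1)=x_0^2$ and $\delta(x_1x_2)=\tfrac12(3x_1^2+1)$ share no nonunit factor in $\mathfrak{R}$, since the latter is a nonzero constant on $\{x_0=0\}$ --- then yields, via strong transitivity, that the plinth invariant of $\mathfrak{R}$ equals $k$. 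To pass from $k=\bar k$ to an arbitrary field of characteristic zero, I would descend: the derivation $D_0$, its kernel, and the plinth computation are defined over $k$, and a Galois-descent argument applied to the conjugating automorphisms recovers the strong transitivity property over $k$.
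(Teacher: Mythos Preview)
Your proposal correctly identifies the architecture of the proof: lift $D\in{\rm LND}(\mathfrak{R})$ to a $\mu$-equivariant $\Theta\in{\rm LND}(\mathfrak{D})$, show $\krn\Theta=k[f]$ with $\mu(f)=-f$, and then produce a $\mu$-commuting automorphism of $\mathfrak{D}$ carrying $k[f]$ to $k[x_0]$. Your extension argument via \'etaleness and your exclusion of the case $\mu(f)=f$ using the fixed point of an involution of $\A^1$ are both fine, as is the plinth computation at the end.

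The genuine gap is the transitivity step itself. You write that your plan is to ``re-run the normalization underlying Daigle's theorem using only $\mu$-commuting building blocks'' and that ``showing that this equivariant reduction terminates is the technical heart of the argument,'' but you do not carry this out. There is no a priori reason the intermediate kernels in Daigle's degree-lowering process remain $\mu$-stable, and even if they do, the exponentials $\Delta_{f_i}$ with $f_i\in k[x_0]$ that Daigle uses need not have $f_i\in k[x_0^2]$. The paper resolves this by a completely different device: it does \emph{not} re-run Daigle's proof equivariantly. Instead it takes any $\alpha\in G$ furnished by Daigle, writes $\alpha=\tau\Delta_{f_n}\tau\cdots\tau\Delta_{f_1}\tau$ in normal form, and uses the relation $\mu\Theta\mu=\Theta$ together with the amalgamated free product structure ${\rm Aut}_k(\mathfrak{D})\cong O_3(k)\ast_C{\bf T}$ (specifically, that a reduced word equal to $1$ forces cancellation of adjacent letters) to conclude $^{\mu}f_i=f_i$ for every $i$. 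Thus $\alpha$ already restricts to $\mathfrak{R}$. This is the missing idea in your proposal.

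A secondary issue is your passage from $\bar k$ to $k$. You invoke {\it Lemma~\ref{TP-STP}(b)}, which requires $k=\bar k$, and then appeal to ``Galois descent applied to the conjugating automorphisms'' without further detail. Since ${\rm Aut}_k(\mathfrak{R})$ is infinite-dimensional, descending a conjugating automorphism from $\bar k$ to $k$ is not automatic. The paper avoids this entirely: the amalgamated-product argument above works over any $k$, and strong transitivity is obtained directly (Corollary~\ref{strong}) by first using transitivity to arrange $\krn\theta=k[x_0^2]$, then lifting to $\Theta$ with $\krn\Theta=k[x_0]$, and finally using the diagonal torus element $(cx_0,x_1,c^{-1}x_2)$ to absorb the scalar coming from {\it Lemma~\ref{D-trans}}. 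No algebraic closure is needed.
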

Daigle and Russell \cite{Daigle.Russell.04} showed that, if $k=\bar{k}$ and $B\in\mathcal{C}_0(k)$ is such that the smooth part 
of ${\rm Spec}(B)$ has finite Picard group, then the number of orbits in ${\rm KLND}(B)$ is either one or two. 
Moreover, $B$ has the transitivity property if and only if $B$ is symmetric at infinity (see the article for this definition). 

Danilov and Gizatullin \cite{Danilov.Gizatullin.77} showed that ${\rm Aut}_k(Y)\cong {\bf L}\ast_C{\bf T}$ where {\bf L} and {\bf T} are the 
linear and triangular automorphisms of $\A_k^3$ restricting to $Y$, 
and $C={\bf L}\cap{\bf T}$. Consequently, every automorphism of $Y$ is the restriction of an automorphism of $\A_k^3$.
{\bf L} can be viewed as the subgroup of $GL_3(k)$ preserving the quadratic form $2x_0x_2-x_1^2$, so ${\bf L}\cong O_3(k)$. 
Being concerned here with $SL_2(k)$-actions, we recognize that $SO_3(k)\cong PSL_2(k)$ and 
$O_3(k)\cong PSL_2(k)\times\langle\mu\rangle$. 
Observe that $\mu$ restricts to $\mathfrak{R}$, and that $\mu\vert_{\mathfrak{R}}=id_{\mathfrak{R}}$. 

Define $\delta'\in {\rm LND}(\mathfrak{D})$ by $\delta'=x_0\partial_{x_1}+x_1\partial_{x_2}$ and let $\delta\in {\rm LND}(\mathfrak{R})$ be the restriction of $\delta'$ to $\mathfrak{R}$. 
The action of $SL_2(k)$ on $\mathfrak{R}$ is given by a group homomorphism $SL_2(k)\to {\rm Aut}_k(\mathfrak{R})$ whose image is $PSL_2(k)$. 
Define the following subgroups of ${\rm Aut}_k(\mathfrak{R})$. 
\begin{enumerate}[label=(\roman*)]
\item $T\cong k^*$ is the torus in $PSL_2(k)$. 
\smallskip
\item $E=\{ \exp(f\delta )\,\vert\, f\in k[x_0^2]\}\cong (k^{[1]},+)$
\smallskip
\item $\mathcal{T}=\langle E,T\rangle\cong E\rtimes k^*$
\smallskip
\item $E_L=\{ \exp (t\delta )\,\vert\, t\in k\} \cong \G_a$
\smallskip
\item $H=PSL_(k)\cap\mathcal{T}=\langle E_L,T\rangle \cong \G_a\rtimes\G_m$
\end{enumerate}

 \begin{theorem}\label{aut-group}  With the notation and hypotheses above:
 \begin{itemize}
 \item [{\bf (a)}] ${\rm Aut}_k(\mathfrak{R})$ is the subgroup of ${\rm Aut}_k(\mathfrak{D})$ consisting of elements which restrict to $\mathfrak{R}$. 
 \item [{\bf (b)}] ${\rm Aut}_k(\mathfrak{R})\cong PSL_2(k)\ast_H\mathcal{T}$
 \end{itemize}
\end{theorem}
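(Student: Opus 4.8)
The plan is to derive both parts from the Danilov--Gizatullin decomposition ${\rm Aut}_k(\mathfrak{D})\cong{\bf L}\ast_C{\bf T}$, combined with an analysis of the fixed-point-free $\Z_2$-action generated by $\mu$.

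For part \textbf{(a)} I would argue that $Y={\rm Spec}(\mathfrak{D})\to X={\rm Spec}(\mathfrak{R})$ is the \emph{unique} connected \'etale double cover of $X$, so that every automorphism of $X$ lifts to $Y$. First I would observe that $\langle\mu\rangle$ acts on $Y$ without fixed points (a fixed point would force $x_0=x_1=x_2=0$, contradicting $2x_0x_2-x_1^2=1$), so $Y\to X$ is \'etale of degree $2$ and Galois with group $\langle\mu\rangle$. Next, from $X=\PP^2\setminus Q$ with $Q$ a smooth conic I would compute ${\rm Pic}(X)={\rm Cl}(\PP^2)/\langle Q\rangle\cong\Z/2\Z$; together with $\mathfrak{R}^*=k^*$ and the Kummer sequence this shows (over $k=\bar k$) that $H^1_{\rm et}(X,\mu_2)\cong{\rm Pic}(X)[2]\cong\Z/2\Z$, so $X$ carries a unique nontrivial connected \'etale double cover, which must be $Y$. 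Given $\phi\in{\rm Aut}_k(\mathfrak{R})$, the pullback cover $\phi^\ast Y\to X$ is again a connected \'etale double cover, hence isomorphic to $Y$ over $X$, and this isomorphism provides a lift $L\in{\rm Aut}_k(\mathfrak{D})$ with $L|_{\mathfrak{R}}=\phi$. The case of general $k$ is handled by Galois descent, the cover and the whole argument being defined over $k$. The converse inclusion is immediate, giving \textbf{(a)}.

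For part \textbf{(b)}, set $G={\rm Aut}_k(\mathfrak{D})$ and note that $L\in G$ preserves $\mathfrak{R}=\mathfrak{D}^{\langle\mu\rangle}$ if and only if $\mathfrak{D}^{\langle L\mu L^{-1}\rangle}=\mathfrak{D}^{\langle\mu\rangle}$, i.e. $L\mu L^{-1}=\mu$; thus the subgroup of $G$ restricting to $\mathfrak{R}$ is exactly the centralizer $C_G(\mu)$. Restriction gives a homomorphism $C_G(\mu)\to{\rm Aut}_k(\mathfrak{R})$, surjective by part \textbf{(a)}, with kernel the Galois group $\langle\mu\rangle$ of $\mathfrak{D}/\mathfrak{R}$; hence ${\rm Aut}_k(\mathfrak{R})\cong C_G(\mu)/\langle\mu\rangle$. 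To compute $C_G(\mu)$ inside the amalgam, I would first locate $\mu=-{\rm id}$: it is the central $\Z_2$-factor of ${\bf L}\cong O_3(k)\cong PSL_2(k)\times\langle\mu\rangle$, and a direct check places $\mu\in C={\bf L}\cap{\bf T}$, so $\mu$ is elliptic and fixes the edge $e_0$ of the Bass--Serre tree $\mathcal{B}$ whose stabilizer is $C$. A short computation using $\mu\delta'\mu^{-1}=\delta'$ gives $\mu\,\exp(f(x_0)\delta')\,\mu^{-1}=\exp(f(-x_0)\delta')$, so that $\mu$ commutes with $\exp(f\delta')$ precisely when $f\in k[x_0^2]$; in particular $\mu$ is central in ${\bf L}$ and in $C$ but \emph{not} in ${\bf T}$, which is exactly why a genuine centralizer computation is required.

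I would then invoke the centralizer theorem for groups acting on trees: since $\mu$ is elliptic, $C_G(\mu)$ acts on the fixed subtree $\mathcal{B}^\mu={\rm Fix}(\mu)$ and decomposes as the fundamental group of the quotient graph of groups on $C_G(\mu)\backslash\mathcal{B}^\mu$, with vertex groups $C_{{\bf L}}(\mu)={\bf L}$, $C_{{\bf T}}(\mu)$ and edge group $C_C(\mu)=C$ (the last two equalities using the centrality of $\mu$ in ${\bf L}\supseteq C$). The main obstacle is to prove that $C_G(\mu)\backslash\mathcal{B}^\mu$ is the single edge $\bar e_0$, i.e. that no extra $\mu$-fixed branches survive; I would establish this by a normal-form argument in ${\bf L}\ast_C{\bf T}$, showing every $\mu$-fixed vertex adjacent to the ${\bf L}$-vertex is $C_G(\mu)$-equivalent to the ${\bf T}$-vertex, so that $C_G(\mu)\cong{\bf L}\ast_C C_{{\bf T}}(\mu)$. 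Quotienting by the central subgroup $\langle\mu\rangle\le C$, which lies in the edge group, then yields $C_G(\mu)/\langle\mu\rangle\cong({\bf L}/\langle\mu\rangle)\ast_{C/\langle\mu\rangle}(C_{{\bf T}}(\mu)/\langle\mu\rangle)$. Finally the identifications ${\bf L}/\langle\mu\rangle\cong SO_3(k)\cong PSL_2(k)$, $C_{{\bf T}}(\mu)/\langle\mu\rangle\cong\mathcal{T}$ (matching $E=\{\exp(f\delta):f\in k[x_0^2]\}$ from the $f(-x_0)=f(x_0)$ condition above), and $C/\langle\mu\rangle\cong H$ give ${\rm Aut}_k(\mathfrak{R})\cong PSL_2(k)\ast_H\mathcal{T}$, as desired.
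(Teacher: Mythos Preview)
Your approach is valid and genuinely different from the paper's. For part~(a) the paper does \emph{not} argue via \'etale covers or ${\rm Pic}(X)$; instead it first proves the transitivity theorem---that a specific subgroup $\Gamma\subset{\rm Aut}_k(\mathfrak{R})$ acts transitively on ${\rm KLND}(\mathfrak{R})$---by a normal-form computation inside ${\rm Aut}_k(\mathfrak{D})={\bf L}\ast_C{\bf T}$, and then deduces (via an explicit description of the triangular subgroup) that ${\rm Aut}_k(\mathfrak{R})$ is generated by $\mathcal{T}$ and $\tau|_{\mathfrak{R}}$, each of which visibly lifts to $\mathfrak{D}$. Your covering-space argument is cleaner for~(a) alone, though the descent step deserves one more sentence: over non-closed $k$ the Kummer sequence gives a $k^*/(k^*)^2$ contribution to $H^1_{\rm et}(X,\mu_2)$, but $\phi^*Y$ is geometrically connected and hence lands in the unique geometrically nontrivial class, so $\phi^*Y\cong Y$ over $X$ as needed. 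For part~(b) the paper simply invokes Wright's criterion (a subgroup of an amalgam generated by its intersections with the two factors is itself the obvious amalgam), which is exactly the Bass--Serre statement you are unpacking. The ``main obstacle'' you flag---that $C_G(\mu)\backslash\mathcal{B}^{\mu}$ is a single edge---is in fact short here: since $\mu$ is central in ${\bf L}$, every neighbor of $v_{\bf L}$ is $\mu$-fixed and ${\bf L}=C_{\bf L}(\mu)$ permutes them transitively; and your own computation $t^{-1}\mu t=\mu\cdot\exp\bigl((f(-x_0)-f(x_0))\delta'\bigr)$ shows that a neighbor $t\cdot v_{\bf L}$ of $v_{\bf T}$ is $\mu$-fixed iff $f\in k[x_0^2]$, i.e.\ iff $t\in C_{\bf T}(\mu)$, so $C_{\bf T}(\mu)$ acts transitively on those. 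The trade-off is that the paper's route produces the transitivity of ${\rm Aut}_k(\mathfrak{R})$ on ${\rm KLND}(\mathfrak{R})$ and ${\rm ILND}(\mathfrak{R})$ as results of independent interest, while yours reaches the stated theorem more directly.
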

 
 Define $X_n={\rm Spec}(B_n)$ for $B_n=k[x,y,z]/(x^nz-y^2-1)$, $n\ge 0$. 
It is well-known that $X_m\not\cong_kX_n$ if $m\ne n$, whereas $X_m\times\A_k^1\cong_kX_n\times\A_k^1$ for all $m,n\ge 1$. 
To distinguish $X_1$ from $X_2$, Danielewski \cite{Danielewski.89} calculated the fundamental group at infinity for $X_1$ and $X_2$ ($k=\C$), and
Makar-Limanov \cite{Makar-Limanov.01a} calculated $ML(X_1)=k$ and $ML(X_2)=k[x]$.
We can also distinguish $X_1$ from $X_2$ by the plinth invariant of $X_1$, i.e., we do not need to calculate any invariant of $X_2$. 
Rather, we have only to observe that $B_2$ has an irreducible locally nilpotent derivation with plinth ideal $(x^2)$ in $k[x]$. 
This is the idea used to prove part (b) of the next theorem.

 \begin{theorem}\label{non-cancel}
 Write $\mathfrak{R}=k[x_0^2,x_0x_1,x_1x_2,x_2^2]$. Let $\mathfrak{R}[V]\cong \mathfrak{R}^{[1]}$ and define 
 \[
 F=15x_0^2V-3(x_0x_1)(x_2^2)-2(x_1x_2) \quad \text{and}\quad \tilde{\mathfrak{R}}=\mathfrak{R}[V]/(F)\, .
 \]
 Let $\tilde{X}={\rm Spec}(\tilde{\mathfrak{R}})$. 
 \begin{itemize}
 \item [{\bf (a)}] $X\times\A_k^1\cong_k\tilde{X}\times\A_k^1$
 \smallskip
 \item [{\bf (b)}] $X\not\cong_k\tilde{X}$
 \smallskip
 \item [{\bf (c)}] $\tilde{X}$ admits an effective $\G_a\rtimes\G_m$-action where the $\G_a$-action is fixed-point free. 
 \smallskip
 \item [{\bf (d)}] If $k=\bar{k}$ then $\tilde{\mathfrak{R}}\in\mathcal{C}(k)$ and ${\rm Cl}(\tilde{\mathfrak{R}})=\Z_2$. 
 \end{itemize}
 \end{theorem}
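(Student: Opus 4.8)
Throughout put $u=x_0^2$, $p=x_0x_1$, $q=x_1x_2$, $s=x_2^2$, so $\mathfrak{R}=k[u,p,q,s]$ with $\delta(u)=0$, $\delta(p)=u$, $\delta(s)=2q$, $\krn\delta=k[u]$ and plinth ideal $(u)$. Everything is organized around $g=3ps+2q$: using $x_1^2=2x_0x_2-1$ and $x_0x_2=2us-pq$ one gets $\delta(g)=15us-2$, so $\delta(g)\equiv-2\pmod{u\mathfrak{R}}$, and $g$ restricts to the coordinate $2q$ on the reduced fibre $\{u=0\}_{\rm red}\cong\A_k^1$. Since $F=15uV-g$, the ring $\tilde{\mathfrak{R}}$ is the affine modification $\mathfrak{R}[g/15u]\subset{\rm Frac}(\mathfrak{R})$ of $\mathfrak{R}$ along $\{u=0\}$ with center $(u,g)$; in particular $\tilde{\mathfrak{R}}_u=\mathfrak{R}_u$ and ${\rm Frac}(\tilde{\mathfrak{R}})={\rm Frac}(\mathfrak{R})$. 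I expect part (a) to be the main obstacle, with (b)--(d) following from (a) and the single explicit derivation below. For (a) the plan is to run the Danielewski cylinder construction \cite{Danielewski.89} relative to $k[u]$: the quotient maps $X\to\A_k^1$ and $\tilde X\to\A_k^1$ agree over $u\ne0$ and have reduced affine-line fibres over $u=0$, and the identity $\delta(g)\equiv-2\pmod{u}$ says precisely that $g$ restricts to a coordinate on the special fibre, so the two fibrations differ only by a gluing datum that trivializes after one stabilization. Concretely I would exhibit $\mathfrak{R}[W]\to\tilde{\mathfrak{R}}[U]$ using $W$ to absorb the failure of $u\mid g$; proving this map is a well-defined isomorphism is the principal technical step.

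The derivation driving (b) and (c) is $\tilde\delta=u\delta$. A one-line check, $(u\delta)(15uV-g)=15u(us-\tfrac{2}{15})-u(15us-2)=0$, shows $\tilde\delta\in{\rm LND}(\tilde{\mathfrak{R}})$, with $\tilde\delta(p)=u^2$ and $\tilde\delta(V)=us-\tfrac{2}{15}$. For (c), the torus $T\subset PSL_2(k)$ gives the grading $\deg u=2$, $\deg p=1$, $\deg q=-1$, $\deg s=-2$, which forces $\deg V=-3$; thus $F$ is homogeneous, $T\cong\G_m$ acts on $\tilde{\mathfrak{R}}$, and $\tilde\delta$ is homogeneous of degree $3$, so $T$ normalizes $\exp(t\tilde\delta)$ and we obtain an effective $\G_a\rtimes\G_m$-action. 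This $\G_a$-action is fixed-point free because $\tilde\delta(p)=u^2$ and $\tilde\delta(V)=us-\tfrac{2}{15}$ have no common zero on $\tilde X$ (the first forces $u=0$, where the second is $-\tfrac{2}{15}\ne0$); the same fact shows that any cofactor of $\tilde\delta$ vanishes nowhere and is therefore a unit, so $\tilde\delta$ is irreducible.

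For (b) I compute the plinth invariant of $\tilde\delta$. One checks $\krn\tilde\delta=k[u]$ and $u^2=\tilde\delta(p)\in{\rm pl}(\tilde\delta)$, so the plinth ideal, an ideal of $k[u]$ containing $(u^2)$, is $(1)$, $(u)$, or $(u^2)$. If $u^j\in{\rm pl}(\tilde\delta)$ with $j\le1$, say $u^j=\tilde\delta(h)$, then $\tilde\delta(p-u^{2-j}h)=u^2-u^{2-j}u^j=0$, so $p-u^{2-j}h\in\krn\tilde\delta=k[u]$ and hence $\bar p\in k$ in $\tilde{\mathfrak{R}}/(u)$. But a computation of that fibre excludes this: in $\mathfrak{R}/(u)$ one has $\bar p^2=0$ and $\bar q^2=-\bar s-2\bar p\bar q\bar s$, and the defining relation $3\bar p\bar s+2\bar q=0$ of $\tilde{\mathfrak{R}}/(u)$ collapses $\bar s$ and $\bar q$ to $0$, leaving $\tilde{\mathfrak{R}}/(u)\cong k[V][\bar p]/(\bar p^2)$ with $\bar p\ne0$. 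Thus ${\rm pl}(\tilde\delta)=(u^2)$ and $\krn\tilde\delta/{\rm pl}(\tilde\delta)\cong k[u]/(u^2)\not\cong_k k$. Any isomorphism $\mathfrak{R}\cong_k\tilde{\mathfrak{R}}$ would carry $\tilde\delta$ to an irreducible locally nilpotent derivation of $\mathfrak{R}$ with plinth quotient $k[u]/(u^2)$, contradicting the strong transitivity and plinth invariant $k$ of {\it Theorem\,\ref{transitive}}; hence $X\not\cong_k\tilde X$.

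Finally (d) is deduced from (a). The isomorphism $\tilde X\times\A_k^1\cong_k X\times\A_k^1$ with $X$ smooth forces $\tilde X$ smooth, hence normal, and gives $\tilde{\mathfrak{R}}^*=(\tilde{\mathfrak{R}}[U])^*\cong(\mathfrak{R}[W])^*=\mathfrak{R}^*=k^*$; together with $\dim\tilde{\mathfrak{R}}=2$, rationality (from ${\rm Frac}(\tilde{\mathfrak{R}})={\rm Frac}(\mathfrak{R})$ and rationality of $X$), and $ML(\tilde{\mathfrak{R}})\subseteq\krn\tilde\delta\ne\tilde{\mathfrak{R}}$, the implication ``(a) and ${\rm(b)}'$ imply (b)'' of {\it Section\,\ref{last}} (for $k=\bar k$) yields $\tilde{\mathfrak{R}}\in\mathcal{C}(k)$. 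For the class group, $\A_k^1$-invariance of ${\rm Cl}$ on normal varieties together with part (a) give ${\rm Cl}(\tilde{\mathfrak{R}})={\rm Cl}(\tilde X\times\A_k^1)={\rm Cl}(X\times\A_k^1)={\rm Cl}(X)$, and ${\rm Cl}(X)={\rm Cl}(\PP^2\setminus Q)=\Z_2$ since $Q$ is a conic.
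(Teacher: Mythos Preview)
Your arguments for (b), (c), and (d) are essentially the paper's and are correct (modulo their dependence on (a), which you note). The gap is (a): you outline a Danielewski-type gluing over $\A^1_k$ but do not carry it out, and you flag it yourself as ``the principal technical step.''

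In fact (a) is the easiest part, and you have already done the only computation it requires. Extend $\delta$ to $\Delta\in{\rm LND}(\mathfrak{R}[V])$ by setting $\Delta V=s$ (your $s=x_2^2$). Then your identity $\delta(g)=15us-2$ gives
\[
\Delta F=\Delta(15uV-g)=15u\cdot s-(15us-2)=2,
\]
so $F/2$ is a slice for $\Delta$. The Slice Theorem now yields $\mathfrak{R}[V]=(\ker\Delta)[F/2]\cong(\ker\Delta)^{[1]}$, and the projection $\mathfrak{R}[V]\to\mathfrak{R}[V]/(F)$ restricts to an isomorphism $\ker\Delta\xrightarrow{\ \sim\ }\tilde{\mathfrak{R}}$. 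Hence $\mathfrak{R}^{[1]}=\mathfrak{R}[V]\cong\tilde{\mathfrak{R}}^{[1]}$, which is exactly (a). This is the paper's route; no fibration comparison is needed, and the slice $F/2$ is precisely what ``absorbs the failure of $u\mid g$'' that you were planning to handle with an auxiliary variable $W$. As a bonus, your derivation $\tilde\delta$ is just the restriction of $u\Delta$ to $\ker\Delta\cong\tilde{\mathfrak{R}}$, which makes both local nilpotence and $\ker\tilde\delta=k[u]$ immediate rather than something to be checked separately.
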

 This example is constructed using the fundamental pair of derivations of $\mathfrak{R}$ which define the $SL_2(k)$-action. 
 
 Finally, the full class of surfaces $X_m$, $m\ge 1$, over $k=\C$ were distinguished from each other by Fieseler \cite{Fieseler.94} who calculated the fundamental group at infinity for each, thus showing that they are 
 pairwise non-homeomorphic. However, when $m\ge 2$, $ML(X_m)=k[x]$ (see \cite{Makar-Limanov.01a}, Proposition), so this invariant does not distinguish among $X_m$, $m\ge 2$. 
 
 \begin{theorem}\label{plinth-invariant} 
 For each $n\in\N$, $n\ge 1$, $B_n\in\mathcal{C}(k)$, $B_n$ has the strong transitivity property, and the plinth invariant of $B_n$ equals $k[x]/(x^n)$. 
 Consequently, $X_m\not\cong_kX_n$ if $m,n\ge 2$ and $m\ne n$. 
 \end{theorem}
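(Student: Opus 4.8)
The plan is to route every assertion through a single explicit locally nilpotent derivation of $B_n$ and a short computation of its plinth ideal inside $k[x]$. First I would record the basic geometry: the Jacobian of $f=x^nz-y^2-1$ is $(nx^{n-1}z,-2y,x^n)$, which cannot vanish on $X_n$ (the last entry forces $x=0$, then $f=0$ forces $y^2=-1$, contradicting $\partial_yf=-2y=0$), so $X_n$ is smooth, hence $B_n$ is a normal affine domain of dimension $2$; the nontrivial $\Z$-grading $\deg x=1,\ \deg y=0,\ \deg z=-n$ (under which $f$ is homogeneous of degree $0$) together with the standard Danielewski-surface argument gives $B_n^*=k^*$. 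For $n=1$ a linear change of coordinates identifies $B_1$ with $\mathfrak{D}$, for which membership in $\mathcal{C}_0(k)$, the strong transitivity property, and plinth invariant $k=k[x]/(x)$ are already recorded. For $n\ge 2$ I would invoke Makar-Limanov's computation $ML(B_n)=k[x]$ from \cite{Makar-Limanov.01a}, so $ML(B_n)\ne B_n$ and $B_n\in\mathcal{C}_1(k)$ once condition (b) is verified. Throughout I use the derivation $D_0=x^n\partial_y+2y\partial_z$, which annihilates $f$ and is locally nilpotent with $\krn D_0=k[x]$.

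Second, I would classify $\mathrm{LND}(B_n)$ for $n\ge 2$. Since $ML(B_n)=k[x]$, every nonzero $D$ satisfies $D(x)=0$, so $D$ extends to the localization $B_n[x^{-1}]\cong k[x,x^{-1}][y]$ (a polynomial ring in $y$, with $z=(y^2+1)x^{-n}$) as a $k[x,x^{-1}]$-derivation of a one-variable polynomial ring, hence $D=h(x)\partial_y$ for some $h\in k[x,x^{-1}]$. Requiring $D(y)\in B_n$ and $D(z)=2yh\,x^{-n}\in B_n$ forces $h\in x^nk[x]$, so $D=c(x)D_0$ with $c\in k[x]$. Thus $\krn D=k[x]\cong_k k^{[1]}$, establishing condition (b) and completing $B_n\in\mathcal{C}(k)$; moreover $D$ is irreducible precisely when $c\in k^*$ (the images $x^n=D_0(y)$ and $2y=D_0(z)$ share no common nonunit factor), so $\mathrm{ILND}(B_n)=\{cD_0\mid c\in k^*\}$.

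Third, I would compute the plinth ideal. Writing $B_n=k[x,z]\oplus k[x,z]\,y$ with $y^2=x^nz-1$, for $g=a(x,z)+b(x,z)y$ one gets $D_0(g)=2(\partial_z a)\,y+\bigl(2(\partial_z b)(x^nz-1)+b\,x^n\bigr)$. For $D_0(g)\in\krn D_0=k[x]$ the $y$-coefficient forces $a=a(x)$, and expanding $b=\sum_j b_j(x)z^j$ the vanishing of each $z^j$-coefficient for $j\ge 1$ gives the recursion $b_{j+1}=\tfrac{2j+1}{2(j+1)}x^nb_j$; since $b$ is a polynomial in $z$ this forces $b_1=0$, hence $b=b_0(x)$ and $D_0(g)=x^nb_0$. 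Therefore $\mathrm{pl}(D_0)=(x^n)$ and $\krn D_0/\mathrm{pl}(D_0)\cong_k k[x]/(x^n)$. Because the image $D_0(B_n)$ is a $k$-subspace, $c\,D_0(B_n)=D_0(B_n)$ for $c\in k^*$, so $\mathrm{pl}(cD_0)=\mathrm{pl}(D_0)$ and \emph{every} irreducible derivation yields the same quotient $k[x]/(x^n)$. For strong transitivity I would note that $\mathrm{KLND}(B_n)=\{k[x]\}$ is a singleton, so the transitivity property holds automatically; the nontrivial $\Z$-grading above then lets me apply \emph{Lemma\,\ref{TP-STP}} to conclude the strong transitivity property when $k=\bar{k}$, the scalings being realized explicitly by the torus $x\mapsto\lambda x,\ z\mapsto\lambda^{-n}z$ (conjugating $D_0$ to $\lambda^nD_0$) and the involution $y\mapsto -y$ (giving $-D_0$). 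The plinth invariant is thus the well-defined class $k[x]/(x^n)$, and since $\dim_k k[x]/(x^m)=m$, distinct values of $n$ give non-isomorphic plinth invariants, yielding $X_m\not\cong_k X_n$ for $m,n\ge 2$, $m\ne n$.

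I expect the strong transitivity statement to be the main obstacle: producing enough of $\mathrm{Aut}_k(B_n)$ to act transitively on $\mathrm{ILND}(B_n)$ amounts to realizing every scalar in $k^*$ as a conjugation factor on $D_0$, and the torus together with $y\mapsto -y$ directly realize only $\pm(k^*)^n$. Over $k=\bar{k}$ this is all of $k^*$ and \emph{Lemma\,\ref{TP-STP}} closes the gap cleanly; the remaining care is to argue that any automorphism preserves the fibration $x\colon X_n\to\A_k^1$ and fixes its unique degenerate fiber $\{x=0\}$ (so that it acts on $k[x]=ML(B_n)$ by $x\mapsto\lambda x$), which pins down its effect on $D_0$. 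The plinth computation itself, while the technical heart, is the routine recursion displayed above, and the invariance of $\krn D/\mathrm{pl}(D)$ across $\mathrm{ILND}(B_n)$ makes the final comparison of the $X_n$ independent of that obstacle.
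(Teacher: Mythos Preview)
Your argument follows the same skeleton as the paper's: invoke Makar-Limanov for $ML(B_n)=k[x]$, use the explicit derivation $\omega=x^n\partial_y+2y\partial_z$ (your $D_0$), and read off ${\rm pl}(\omega)=(x^n)$. The paper simply asserts the plinth ideal and says the strong transitivity proof is ``almost identical'' to that of {\it Lemma~\ref{D-trans}}; you instead supply the details via the $k[x,z]\oplus k[x,z]y$ decomposition and the recursion on the $z$-coefficients of $b$, which is correct.

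Where your write-up genuinely adds something is the explicit classification ${\rm ILND}(B_n)=\{cD_0\mid c\in k^*\}$ together with the observation that $\mathrm{pl}(cD_0)=\mathrm{pl}(D_0)$ for all $c$. This makes $\krn D/\mathrm{pl}(D)\cong k[x]/(x^n)$ for \emph{every} $D\in{\rm ILND}(B_n)$, so the comparison $X_m\not\cong_kX_n$ goes through whether or not ${\rm Aut}_k(B_n)$ acts transitively on ${\rm ILND}(B_n)$. That is a cleaner route to the ``consequently'' clause than the paper's, which formally passes through the plinth invariant and hence through strong transitivity.

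Your caution about strong transitivity over non-closed $k$ is well placed. Transcribing the proof of {\it Lemma~\ref{D-trans}} to $B_n$ one uses the diagonal automorphism $(x,y,z)\mapsto(\lambda x,\,y,\,\lambda^{-n}z)$, which conjugates $D_0$ to $\lambda^nD_0$; together with $y\mapsto -y$ this realizes only $\pm(k^*)^n$, not all of $k^*$. So over $k=\bar k$ the argument (equivalently, {\it Lemma~\ref{TP-STP}(b)}) closes the gap, which is all that is needed for the paper's applications; your direct plinth computation already handles the final non-isomorphism statement without appealing to this.
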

 
 \subsection{Automorphism groups} Generators for $GA_2(k):={\rm Aut}_k(\A_k^2)$ were first given by Jung (1942) in \cite{Jung.42} in case the field $k$ is algebraically closed of characteristic zero. 
Jung showed that $GA_2(k)$ is generated by its affine linear and triangular subgroups, ${\rm Aff}_2(k)$ and $BA_2(k)$. 
A structural description of $GA_2(k)$  was subsequently given by 
Van der Kulk (1953) in \cite{Kulk.53} for any characteristic zero field. In particular:
\[
GA_2(k)={\rm Aff}_2(k)*_BBA_2(k)\quad {\rm where}\quad B={\rm Aff}_2(k)\cap BA_2(k)
\]
This was generalized to any field by Nagata (1972) in \cite{Nagata.72}. 

The exponential automorphisms of $\A_k^2$ were determined by Ebey (1962) in \cite{Ebey.62} and Rentschler (1968) in \cite{Rentschler.68}. 
Without using the results of Jung or Van der Kulk, they show that the tame subgroup of $GA_2(k)$ (generated by its linear and triangular elements) acts transitively 
on ${\rm KLND}(B)$. 
From this, Rentschler recovers Jung's theorem. Transitivity also follows from the amalgamated free product structure of $GA_2(k)$, since reductive subgroups must conjugate to $GL_2(k)$ and unipotent subgroups must conjugate to $BA_2(k)$; see \cite{Serre.80}, Theorem 8. 

For the affine surfaces $xy+az^2=b$ over a field $K$ ($a,b\in K^*$), Danilov and Gizatullin (1977) computed the automorphism groups as amalgamated free products in \cite{Danilov.Gizatullin.77}. 
Lamy (2005) derived the same structure for these groups by different methods \cite{Lamy.05} .
In the more general case, generators for the automorphism group of a Danielewski surface $S$ defined by $xy=P(z)$ were given by Makar-Limanov (1990) in \cite{Makar-Limanov.90}. 
Makar-Limanov showed that this group is generated by the linear and triangular automorphisms of $\A_k^3$ which restrict to $S$.
Let $d =P'(z)\partial_y-x\partial_z$ and, for $f(x)\in k[x]$, define $\Delta_f=\exp (fd )$. 
Without using Makar-Limanov's result, Daigle (2003) showed in \cite{Daigle.03a} that the subgroup generated by $\{ \Delta_f\, |\, f\in k[x]\}$ and the involution $\tau (x,y,z)=(y,x,z)$ acts transitively 
on ${\rm KLND}(k[S])$. 
Makar-Limanov gives another proof of Daigle's result in \cite{Makar-Limanov.03}. 
In \cite{Blanc.Bot.Poloni.23}, Blanc, Bot and Poloni (2023) used Makar-Limanov's generators of ${\rm Aut}_k(S)$ to show that ${\rm Aut}_k(S)$ is the amalgamated free product of its affine and triangular subgroups, 
i.e., elements that descend from an affine or triangular automorphism of $\A_k^3$, respectively. 

In \cite{Liendo.Regeta.Urech.23}, Liendo, Regeta and Urech (2023) prove that any normal affine $SL_2$-surface is uniquely determined by its automorphism group (Proposition 6.8). 

\subsection{The Surface $X$.} 
The papers \cite{Gizatullin.71c,Flenner.Zaidenberg.05b} describe normal affine surfaces over $\C$ which admit an action of an algebraic group with a big open orbit, i.e., the complement of the orbit is finite. 
In the first paper, Gizatullin (1971) deals with the smooth case, and shows that the only such surfaces are $\C\times\C$, $\C\times\C^*$, $\C^*\times\C^*$, $Y$ and $X$. 
In the second paper, Flenner and Zaidenberg (2005) generalize these results to the normal case. 

Working over $k=\C$, $\G_a$-actions on $Y$ and $X$ were studied by Fauntleroy and Magid (1978) in \cite{Fauntleroy.Magid.78}. 
Each surface has a fixed-point free $\G_a$-action which is the restriction of the $SL_2(\C )$-action to its upper-triangular unipotent subgroup. 
The authors show that each action has a geometric quotient, where $Y/\G_a\cong L$ is not affine, and $X/\G_a\cong\A_k^1$ is affine. 
In particular, $L$ denotes a line with the origin doubled. The $\G_a$-action on $Y$ is locally trivial but the quotient is not separated, so the action is not proper.
On the other hand, the action on $X$ is not locally trivial -- otherwise it would be globally trivial since $X$ would be an $\A_k^1$-bundle over $\A_k^1$, and this is clearly not the case. 
Since a proper action is locally trivial, this implies that the $\G_a$-action on $X$ is not proper. 
The authors also calculate the divisor class group (which equals the Picard group) for each surface: ${\rm Cl}(Y)\cong \Z$ and ${\rm Cl}(X)\cong\Z_2$. 

These surfaces also appear in Huckleberry's study (1986) of more general complex homogeneous surfaces \cite{Huckleberry.86}. 
In \cite{Masuda.Miyanishi.03}, Masuda and Miyanishi (2003) characterize $X$ as the unique $\Q$-homology plane $S$ with $ML(S)=\C$ and $H_1(S,\Z )=\pi_1(S)=\Z_2$. Recall that
a $\Q$-homology plane $S$ has $H_i(S,\Q )=0$ for $i\ge 1$. 
$Y$ and $X$ also feature in \cite{Flenner.Zaidenberg.05a}, in which Flenner and Zaidenberg (2005) study surfaces that admit an effective action of a maximal torus $\mathbb{T}$ such that any $\C^*$-action is conjugate to a subtorus of $\mathbb{T}$. 

Bandman and Makar-Limanov (2005) consider the surface $X$ over $k=\C$ in \cite{Bandman.Makar-Limanov.05}, Example 1. Of particular note, they give a simple set of defining relations for 
$X$ as a subvariety of $\A_{\C}^5$, and they give two independent locally nilpotent derivations on $\mathfrak{R}$. We give these relations in {\it Section\,\ref{XandY}}. 
Since the relations are polynomials over $\Z$, they define $X$ over any characteristic zero field.

In \cite{Freudenburg.24}, the author (2024) generalizes Popov's classification of normal affine $SL_2$-surfaces with trivial units to any field $k$ of characteristic zero. 
Proposition 5.3 shows that $X$ is not isomorphic to a hypersurface of the form $xy=p(z)$ for $p(z)\in k[z]$ (2024). 
This was shown independently by Arzhantsev and Zaitseva (2024) in \cite{Arzhantsev.Zaitseva.24} for $k=\C$.

\subsection{Conventions} Throughout, $k$ is a field of characteristic zero and $\bar{k}$ is its algebraic closure. 
A $k$-domain $B$ is an integral domain containing $k$. Given $b\in B$, $B_b=B[b^{-1}]$.
For the $k$-domain $B$, $B^{[n]}$ is the polynomial ring in $n$ variables over $B$ and ${\rm frac}(B)$ is its field of fractions. 
Given $k$-domains $A$ and $B$, $A\cong_kB$ indicates that $A$ and $B$ are isomorphic as $k$-algebras. 
Similarly, if $U$ and $V$ are algebraic $k$-varieties, $U\cong_kV$ indicates that $U$ and $V$ are isomorphic as $k$-varieties. 
$\G_a$ denotes the additive group $k$, and $\G_m$ the multiplicative group $k^*$.


\section{locally nilpotent derivations and fundamental pairs}\label{prelim}

\subsection{Locally nilpotent derivations} Let $B$ be an affine $k$-domain. The set of $k$-derivations of $B$ is ${\rm Der}_k(B)$. 
Given $D\in {\rm Der}_k(B)$, the kernel of $D$ is denoted $\krn D$, and $D$ is {\bf locally nilpotent} if, given $b\in B$, $D^nb=0$ for $n\gg 0$. 
The set of locally nilpotent derivations of $B$ is ${\rm LND}(B)$. This set is in bijective correspondence with the set of $\G_a$-actions on ${\rm Spec}(B)$ via the exponential map. 
We say that $D\in {\rm LND}(B)$ is {\bf irreducible} if the image $DB$ is contained in no proper principal ideal of $B$. 

Given $D\in {\rm LND}(B)$, set $A=\krn D$. An element $s\in B$ is a {\bf local slice} for $D$ if $Ds\ne 0$ and $D^2s=0$. 
A local slice $s$ for $D$ is a {\bf slice} if $Ds=1$. The {\bf Slice Theorem} says that, for any local slice $s$, $B_{Ds}=A_{Ds}[s]\cong_kA_{Ds}^{[1]}$. 

The following theorem is due to Vasconcelos. 
\begin{theorem}\label{Vasconcelos}  {\rm \cite{Vasconcelos.69}} Let $B$ be a $k$-domain with subalgebra $R$ such that $B$ is an integral extension of $R$. 
If $D\in {\rm Der}_k(B)$ restricts to a locally nilpotent derivation of $R$ then $D\in {\rm LND}(B)$.
\end{theorem}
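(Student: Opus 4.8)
The plan is to peel the statement down to a single element and then to a finite integral extension on which $D$ can be controlled by a ``connection matrix.'' Throughout, for a derivation $d$ write $\mathrm{Nil}(d)$ for the set of elements killed by some power of $d$, so that $d$ is locally nilpotent exactly when $\mathrm{Nil}(d)$ is everything. It suffices to show $D^mb=0$ for $m\gg 0$ for each $b\in B$. Fix a monic relation $b^n+a_{n-1}b^{n-1}+\cdots+a_0=0$ with $a_i\in R$. Only finitely many elements of $R$ occur, and since $D|_R$ is locally nilpotent the $D$-orbit of each $a_i$ is finite; let $R_0\subseteq R$ be the $k$-subalgebra generated by all $D^ja_i$. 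Then $R_0$ is a finitely generated (hence Noetherian) $k$-algebra, it is $D$-stable, $D|_{R_0}$ is locally nilpotent, and $b$ is integral over $R_0$. This reduces the theorem to the case where $R$ is a finitely generated $k$-algebra.

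Next I reduce to a finite module. Let $K=\mathrm{frac}(R)$ and extend $D$ to $K(b)$ by the quotient rule. Taking $p\in K[T]$ to be the minimal polynomial of $b$ over $K$ and differentiating $p(b)=0$ gives $p'(b)\,Db=-p^{D}(b)$, where $p^{D}$ denotes $D$ applied to the coefficients. Because $K(b)$ is a field and $p'(b)\neq 0$ (characteristic $0$, so $p$ is separable), $Db\in K(b)$; since $K(b)=K[b]$ is $D$-stable, every iterate $D^jb$ lies in the finite extension $K(b)$, and each $D^jb$ is integral over $R$ because it lies in $B$. Consequently the smallest $D$-stable $R$-subalgebra $\tilde B=R[b,Db,D^2b,\dots]\subseteq B$ is contained in the integral closure of $R[b]$ in $K(b)$, which is a finite $R$-module as $R$ is a finitely generated $k$-algebra. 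Thus $\tilde B$ is a $D$-stable finite $R$-submodule of $B$, and I am reduced to proving that $D$ is locally nilpotent on such a $\tilde B$.

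The heart of the matter is now a connection-matrix argument. Choose $R$-module generators $z_1,\dots,z_N$ of $\tilde B$; since $\tilde B$ is $D$-stable we may write $Dz_i=\sum_j A_{ji}z_j$ with $A\in M_N(R)$, and then $D^s\mathbf z=A_s\mathbf z$ where $\mathbf z=(z_1,\dots,z_N)^{\mathsf T}$ and $A_{s+1}=D(A_s)+A_sA$. The assertion to prove is $A_s\mathbf z=0$ for $s\gg 0$. The only obstruction to termination is a nonzero ``eigenvalue,'' i.e.\ a relation $Dz=\lambda z$ with $0\neq z\in\tilde B$ and $\lambda\in K$, and this is ruled out by the mechanism that drives the whole theorem: writing the minimal relation $z^m+\sum_{i<m}c_iz^i=0$ of $z$ over $K$ and applying $D$, the hypothesis $Dz=\lambda z$ together with the independence of $1,z,\dots,z^{m-1}$ forces $Dc_i=(m-i)\lambda\,c_i$ for each $i$. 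Since $(m-i)\lambda\neq 0$, iterating $D$ shows $c_i$ could be annihilated by no power of $D$ unless $c_i=0$; as the $c_i$ are $D$-nilpotent (by induction on $[K(b):K]$, the lower-degree case), we get $c_i=0$ for all $i$ and hence $z=0$. With all eigenvalues thereby forced to be nilpotent, a triangularization of $A$ over $K$ yields $A_s\mathbf z=0$ for large $s$, i.e.\ $D$ is locally nilpotent on $\tilde B$ and in particular $D^mb=0$ for $m\gg0$.

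I expect the main obstacle to be precisely this last step: showing that the flow of $D$ on the finite module $\tilde B$ is nilpotent, equivalently that integrality over a base on which $D$ acts locally nilpotently is incompatible with any nonzero residue of the connection. The delicate points are that $\tilde B$ need not be $R$-free, so the triangularization must be carried out after passing to $K$ while verifying that all intermediate quantities stay integral and have coefficients lying in $\mathrm{Nil}(D|_K)$; that the induction on $[K(b):K]$ organizing the eigenvalue computation must be set up so the base case (degree one) is genuinely settled; and that characteristic $0$ is invoked repeatedly, both to guarantee $p'(b)\neq 0$ and to ensure the scalar $(m-i)\lambda$ is nonzero. Once the eigenvalue obstruction is eliminated, local nilpotence of $D$ on $\tilde B$, hence on the arbitrary element $b$, follows, which proves the theorem.
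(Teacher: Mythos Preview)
The paper does not prove this theorem; it merely states it and cites Vasconcelos's 1969 paper. So there is no ``paper's own proof'' to compare against, and your write-up should be judged on its own.

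Your reductions are sound: shrinking $R$ to a finitely generated $D$-stable subalgebra is fine, and the passage to the finite $R$-module $\tilde B$ (using separability of the minimal polynomial and finiteness of integral closure for affine $k$-domains) is correct and useful. The problem is the final ``eigenvalue/triangularization'' step, which is where the actual content of Vasconcelos's theorem lives and which, as written, has a genuine gap.

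Concretely: an eigenvector $z$ of the matrix $A\in M_N(R)$ lives in $\tilde B\otimes_R K\cong K(b)$ (or even $\overline K$), not a priori in $\tilde B$, so your hypothesis ``$0\ne z\in\tilde B$'' is unjustified. More seriously, after deriving $Dc_i=(m-i)\lambda c_i$ you assert that ``the $c_i$ are $D$-nilpotent (by induction on $[K(b):K]$).'' But the $c_i$ are elements of $K=\mathrm{frac}(R)$, and $D$ is \emph{not} locally nilpotent on $K$: already for $R=k[x]$, $D=\partial_x$ one has $D^n(1/x)\ne 0$ for all $n$. Your induction is on the degree over $K$ of elements \emph{integral over $R$}, but nothing tells you the $c_i$ are integral over $R$; they are merely coefficients of a minimal polynomial over $K$. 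Even the base case (degree one, i.e.\ elements of $K$ integral over $R$) is exactly Vasconcelos's theorem for the normalization $\bar R\subset K$ and requires its own argument. Finally, the leap from ``no nonzero eigenvalue'' to ``$A_s\mathbf z=0$ for large $s$ via triangularization over $K$'' ignores that the recursion $A_{s+1}=D(A_s)+A_sA$ mixes the derivation with matrix multiplication; a change of basis over $K$ does not make this recursion upper-triangular in any obvious sense.

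The standard route around this is to use the degree function $\nu=\deg_D$ on $R$ (with $\nu(ab)=\nu(a)+\nu(b)$, coming from the fact that the associated graded of an LND is a domain), extend it to $K$ and then to $K(b)$, and show directly that $\nu(b)<\infty$ forces $D$-nilpotence of $b$. If you want to keep your module-theoretic setup, you would need to prove that a $D$-connection on a finite-dimensional $K$-vector space, with $D|_R$ locally nilpotent and the connection matrix having entries in $R$, is ``regular singular with nilpotent residue''; that is essentially the content of the theorem and cannot be dispatched by a one-line triangularization.
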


See \cite{Freudenburg.17} for further details on locally nilpotent derivations. 

\subsection{Fundamental pairs} Suppose that $B$ is an affine $k$-domain. We can view ${\rm Der}_k(B)$ as a Lie algebra with the usual bracket operation. 
The pair $(D,U)\in {\rm LND}(B)^2$ is a {\bf fundamental pair} for $B$ if $D$ and $U$ satisfy the relations
\[
[D,[D,U]]=-2D \quad\text{and}\quad [U,[D,U]]=2U
\]
i.e., $D$ and $U$ generate a Lie subalgebra isomorphic to $\mathfrak{sl}_2(k)$. Fundamental pairs for $B$ are in bijective correspondence with algebraic actions of $SL_2(k)$ on $B$. 
In particular, an $SL_2(k)$-action on $B$ induces a fundamental pair $(D,U)$ for $B$, since the upper and lower unipotent triangular subgroups of $SL_2(k)$, 
each being isomorphic to $\G_a$, give rise to locally nilpotent derivations of $B$ which satisfy these relations. 

Suppose that $(D,U)$ is a fundamental pair for $B$. Set $E=[D,U]$. 
By \cite{Andrist.Draisma.Freudenburg.Huang.Kutzschebauch.ppt}, Theorem 2.6, $E$ is a semi-simple derivation which induces a $\Z$-grading of $B$ given by 
$B=\bigoplus_{d\in\Z}B_d$ where $B_d=\krn (E-dI)$. The kernel $A=\krn D$ is a graded subring and $A=\bigoplus_{d\in\N}A_d$ is $\N$-graded.   
In addition:
\[
A_0=\krn D\cap\krn U=B^G \quad \text{where}\quad G=SL_2(k)
\]
Each orbit $G\cdot f$ of $f\in B$ is a finite dimensional vector space, and we denote this orbit by $\widehat{f}$. 
If $f\in A_d$ then a basis for $\widehat{f}$ is $\{ U^jf\, |\, 0\le j\le d\}$. 

Suppose that $B'$ is another $k$-domain endowed with the fundamental pair $(D',U')$. 
A $k$-algebra morphism $\varphi :B'\to B$ is {\bf equivariant} if $D\varphi =\varphi D'$ and $U\varphi =\varphi U$. 

Given $n\in\N$, let $\V_n(k)=kX_0\oplus\cdots\oplus kX_n$ be the irreducible $SL_2$-module of dimension $n+1$ over $k$. 
Then $\V_n(k)$ induces the fundamental pair $(D_n,U_n)$ for the polynomial ring $k[\V_n(k)]=k[X_0,\hdots X_n]$ defined by
\[
D_n(X_i)=X_{i-1} \,\, (1\le i\le n) \,\, ,\,\, D_n(X_0)=0
\]
and:
\[
U_n(X_i)=(i+1)(n-i)X_{i+1}\,\, (0\le i\le n-1) \,\, ,\,\, U(X_n)=0
\]
\begin{lemma}\label{cables} {\rm \cite{Andrist.Draisma.Freudenburg.Huang.Kutzschebauch.ppt}} Let $f_1,\cdots ,f_r\in A$ be nonzero and homogeneous such that $A=k[f_1,\hdots ,f_r]$.
\begin{itemize}
\item [{\bf (a)}] $B=k[\widehat{f_1},\hdots ,\widehat{f_r}]$
\item [{\bf (b)}]  Let $d_i\in\N$ be such that $f_i\in A_{d_i}$, $1\le i\le r$. 
There exists an equivariant surjection:
\[
 \varphi :k[\V_{d_1}(k)\oplus\cdots\oplus\V_{d_r}(k)]\to B
 \]
\end{itemize} 
\end{lemma}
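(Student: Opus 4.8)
The plan is to prove part (a) first; part (b) then follows quickly, since its surjectivity assertion is essentially a restatement of (a).

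For part (a), set $C=k[\widehat{f_1},\ldots,\widehat{f_r}]$. Each orbit-span $\widehat{f_i}$ is an $SL_2(k)$-submodule of $B$, and $U$ is a derivation with $U(\widehat{f_i})\subseteq\widehat{f_i}$, so by the Leibniz rule $U(C)\subseteq C$. Moreover $f_i\in\widehat{f_i}\subseteq C$ and $A=k[f_1,\ldots,f_r]$, whence $A=\krn D\subseteq C$. The heart of the matter is the linear identity
\[
B=\mathrm{span}_k\{\,U^{j}a \mid a\in A,\ j\ge 0\,\}.
\]
To see this, note that since every orbit $\widehat f$ is finite-dimensional, $B$ is a rational $SL_2(k)$-module; as $\mathrm{char}\,k=0$ (with $\mathfrak{sl}_2(k)$ split), complete reducibility gives $B=\bigoplus_\beta N_\beta$ with each $N_\beta\cong_k\V_{n_\beta}(k)$ irreducible. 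Each $N_\beta$ is generated by its highest weight vector $w_\beta$, which is annihilated by the raising operator $D$, so $w_\beta\in\krn D=A$, and $N_\beta=\mathrm{span}_k\{U^{j}w_\beta\}$. The displayed identity follows. Since $A\subseteq C$ and $U(C)\subseteq C$, every $U^{j}a$ lies in $C$, giving $B\subseteq C$, and hence $B=C$.

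For part (b), fix $i$. Because $f_i\in A_{d_i}$, the module $\widehat{f_i}$ is irreducible of dimension $d_i+1$ with basis $\{U^{j}f_i\mid 0\le j\le d_i\}$, so $\widehat{f_i}\cong_k\V_{d_i}(k)$ as $SL_2(k)$-modules. By Schur's lemma there is a unique equivariant linear isomorphism $\psi_i:\V_{d_i}(k)\to\widehat{f_i}$ with $\psi_i(X_0)=f_i$; explicitly $\psi_i(X_j)=c_jU^{j}f_i$, the scalars $c_j$ being forced by $U_{d_i}X_j=(j+1)(d_i-j)X_{j+1}$ together with the standard $\mathfrak{sl}_2$-identity $DU^{j}f_i=j(d_i-j+1)U^{j-1}f_i$ (derived from $[D,U]=E$ and $Ef_i=d_if_i$). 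Taking the direct sum of the $\psi_i$ and extending by the universal property of the polynomial ring yields a $k$-algebra homomorphism
\[
\varphi:k[\V_{d_1}(k)\oplus\cdots\oplus\V_{d_r}(k)]\to B
\]
agreeing with each $\psi_i$ on the corresponding block of the degree-one part.

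It remains to check that $\varphi$ is equivariant and surjective. Let $(D',U')$ denote the fundamental pair on $k[\V_{d_1}(k)\oplus\cdots\oplus\V_{d_r}(k)]$ induced by its $SL_2(k)$-module structure, so that $D'$ and $U'$ restrict to $D_{d_i}$ and $U_{d_i}$ on the $i$-th block of variables. Equivariance is then formal: $D,U,D',U'$ all act as derivations, and since the ring map $\varphi$ matches the equivariant maps $\psi_i$ on the generators, the Leibniz rule propagates the identities $D\varphi=\varphi D'$ and $U\varphi=\varphi U'$ from the generators to the entire polynomial ring. Surjectivity is immediate from part (a): the image of $\varphi$ is the subalgebra generated by the $\psi_i(\V_{d_i}(k))=\widehat{f_i}$, which equals $k[\widehat{f_1},\ldots,\widehat{f_r}]=B$. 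I expect the only real obstacle to be the representation-theoretic input to part (a) --- that the locally finite, rational $SL_2(k)$-module $B$ is completely reducible and that the highest weight vector of each irreducible summand lands in $\krn D$; everything else is bookkeeping, with (b) handed its surjectivity directly by (a).
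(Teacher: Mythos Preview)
The paper does not supply a proof of this lemma; it is quoted from \cite{Andrist.Draisma.Freudenburg.Huang.Kutzschebauch.ppt} and used as a black box (for instance in the proof of {\it Theorem\,\ref{non-extend}(a)}). So there is no in-paper argument to compare against.

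Your proof is correct and is the natural one. The only substantive step is the displayed identity $B=\mathrm{span}_k\{U^{j}a\mid a\in A,\ j\ge 0\}$, and your justification via complete reducibility of the locally finite rational $SL_2(k)$-module $B$ (characteristic zero) together with the fact that each irreducible summand is generated over $U$ by its highest-weight vector in $\krn D$ is exactly what is needed. Part (b) is, as you say, bookkeeping once (a) is in hand: the map on generators is forced by sending $X_0$ in the $i$-th block to $f_i$, equivariance propagates from generators by the Leibniz rule, and surjectivity is (a). One cosmetic remark: invoking Schur's lemma is slightly heavier than necessary, since you immediately write down $\psi_i$ explicitly anyway; you could simply define $\psi_i(X_j)$ by the recursion imposed by $U_{d_i}$ and verify directly that it intertwines $D_{d_i}$ with $D$ using $DU^{j}f_i=j(d_i-j+1)U^{j-1}f_i$.
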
 

See \cite{Freudenburg.24, Andrist.Draisma.Freudenburg.Huang.Kutzschebauch.ppt} for further details on fundamental pairs. 


\section{The surfaces $Y$ and $X$}\label{XandY}

\subsection{Two homogeneous spaces for $SL_2(k)$}\label{hom-space}
In this section, assume that $k=\bar{k}$.
Define $\xi\in SL_2(k)$  and the one-dimensional torus $T\subset SL_2(k)$ by:
\[
\xi =\begin{pmatrix} 0&-1\cr 1&0\end{pmatrix} \quad\text{and}\quad T=\begin{pmatrix}\lambda & 0\cr 0&\lambda^{-1}\end{pmatrix}_{\lambda\in k^*}
\]
Define $K=\langle\xi\rangle\cong\Z_4$ and $N=TK$, the normalizer of the torus in $SL_2(k)$. 
Then $T$ and $N$ are the only closed one-dimensional reductive subgroups of $SL_2(k)$. 
By Matsushima's Criterion, the homogeneous spaces $SL_2(k)/T$ and $SL_2(k)/N$ are affine surfaces. We show directly that:
\[
SL_2(k)/T\cong_k Y \quad\text{and}\quad SL_2(k)/N\cong_k X
\]
Let $\begin{pmatrix} x&y\cr z&w\end{pmatrix}$ be coordinates for $SL_2(k)$, where $xw-yz=1$, and let $B=k[x,y,z,w]$ be its coordinate ring. 
The actions of $T$ and $H$ on $B$ are given by:
\[
\lambda\cdot (x,y,z,w)=(\lambda x,\lambda y,\lambda^{-1}z,\lambda^{-1}w) \quad\text{and}\quad \xi\cdot (x,y,z,w)=(-z,-w,x,y)
\]
Therefore, $B^T=k [xw,xz,yz,yw]$. Set $a=xw, b=xz, c=yz, d=yw$. Then $a=c+1$ and $bd=c(c+1)$. We thus see that $B^T\cong_k\mathfrak{D}$. 

Let $\tilde{c}=c+\frac{1}{2}$ so that $B^T=k[b,\tilde{c},d]$. We have $B^N=(B^T)^K=k[b,\tilde{c},d]^K$. Since
\[
\xi\cdot (b,\tilde{c},d)=(-b,-\tilde{c},-d)
\]
we obtain $B^N=(B^T)^{K/\Z_2}\cong_k\mathfrak{D}^{\langle\mu\rangle}=R$. 

\subsection{Fibers for $Y$ and $X$}
Let $\G_a$ act on $Y$ by $\exp (t\delta^{\prime})$, and on $X$ by $\exp (t\delta )$, $t\in k$. 
Let $\pi_1:Y\to \A_k^1$ and $\pi_2:X\to\A_k^1$ be their respective algebraic quotient maps, i.e., induced by inclusions:
\[
\krn\delta^{\prime}=k[x_0]\subset\mathfrak{D}\quad\text{and}\quad \krn\delta=k[x_0^2]\subset\mathfrak{R}
\]
\begin{lemma}\label{fibers} Let $Z=\A_k^1\setminus\{ 0\}$.
 \begin{itemize}
 \item [{\bf (a)}] The $\G_a$-actions on $Y$ and $X$ are fixed-point free. 
 \item [{\bf (b)}] $\pi_1^{-1}(Z)\cong_kZ\times\A_k^1$ and $\pi_2^{-1}(Z)\cong_kZ\times\A_k^1$ and these isomorphisms are $\G_a$-equivariant. 
 \item [{\bf (c)}] The fiber $\pi_1^{-1}(0)$ is reduced, with coordinate ring $k[x_1]/(x_1^2+1)^{[1]}$. 
  \item [{\bf (d)}] The fiber $\pi_2^{-1}(0)$ is non-reduced, with coordinate ring $\kappa^{[1]}$, where $\kappa\cong_kk[t]/(t^2)$. 
 \end{itemize}
\end{lemma}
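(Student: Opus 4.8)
The plan is to treat (a) and (b) with locally nilpotent derivation theory and (c), (d) by direct computation of the zero-fiber coordinate rings. Write $u=x_0^2$, $v=x_0x_1$, $w=x_1x_2$, $p=x_2^2$, so that $\mathfrak{R}=k[u,v,w,p]$, and record $\delta'(x_0)=0$, $\delta'(x_1)=x_0$, $\delta'(x_2)=x_1$, whence $\delta(u)=0$, $\delta(v)=u$, $\delta(p)=2w$ and $\delta(w)=x_0x_2+x_1^2$. For (a) I use that $\exp(tD)$ is fixed-point free iff the image $DB$ generates the unit ideal. The image of $\delta'$ contains $x_0,x_1$, and $1=2x_0x_2-x_1^2\in(x_0,x_1)$; the image of $\delta$ contains $\delta(v)=u$, $\delta(p)=2w$ and $\delta(w)=6up-3vw-1$ (using $2x_0x_2-x_1^2=1$), and since $6up-3vw\in(u,w)$ we again obtain $1$ in the image ideal. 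Equivalently, $\langle\mu\rangle$ acts freely on $Y$, so $Y\to X$ is an \'etale $\Z_2$-cover and a $\G_a$-fixed point of $X$ would lift to one on $Y$ by connectedness of $\G_a$, which is impossible.

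For (b) I localize at the base variable and invoke the Slice Theorem. On $\mathfrak{D}_{x_0}$ the element $s=x_1/x_0$ satisfies $\delta'(s)=1$, so $\mathfrak{D}_{x_0}=(\krn\delta')_{x_0}[s]=k[x_0,x_0^{-1}]^{[1]}$ and $\pi_1^{-1}(Z)\cong_kZ\times\A_k^1$. On $\mathfrak{R}_{x_0^2}$ the element $v/u=x_1/x_0$ lies in the ring and satisfies $\delta(v/u)=u/u=1$, giving $\mathfrak{R}_{x_0^2}=k[x_0^2,x_0^{-2}]^{[1]}$ and $\pi_2^{-1}(Z)\cong_kZ\times\A_k^1$. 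Both isomorphisms are equivariant because $\exp(tD)$ translates the slice variable ($Ds=1$, $D^2s=0$) and fixes the base ring.

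For (c) the fiber ring is $\mathfrak{D}/x_0\mathfrak{D}=k[x_1,x_2]/(x_1^2+1)=(k[x_1]/(x_1^2+1))[x_2]$; since $x_1^2+1$ is separable over a field of characteristic zero this is reduced and equals $k[x_1]/(x_1^2+1)^{[1]}$. For (d) the fiber ring is $\mathfrak{R}/(u)$, and the relation $v^2=x_0^2x_1^2=u\,x_1^2$ gives $\bar v^2=0$ with $\bar v\ne0$, forcing the fiber to be non-reduced. To identify it precisely I would substitute $x_0x_2=2up-vw$ and $x_1^2=4up-2vw-1$ into the monomial identities among $u,v,w,p$ and reduce modulo $u$, obtaining $\mathfrak{R}/(u)=k[v,w,p]/(v^2,\;w^2+2vwp+p)$; since $v^2=0$ makes $1+2vw$ a unit with inverse $1-2vw$, the second relation solves for $p$ and eliminates it, leaving $k[v,w]/(v^2)=(k[v]/(v^2))^{[1]}=\kappa^{[1]}$ with $\kappa\cong_kk[t]/(t^2)$.

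The main obstacle is part (d): exhibiting a nilpotent is immediate, but proving that $\mathfrak{R}/(u)$ is \emph{exactly} $\kappa^{[1]}$ requires control of the full relation ideal of $k[u,v,w,p]\to\mathfrak{R}$, so that nothing beyond $v^2$ and $w^2+2vwp+p$ survives modulo $u$. A cleaner route that I would use to certify the answer exploits that $2$ is a unit: taking $\langle\mu\rangle$-invariants is exact and $x_0^2\mathfrak{D}\cap\mathfrak{R}=x_0^2\mathfrak{R}$, so $\mathfrak{R}/(u)\cong(\mathfrak{D}/x_0^2\mathfrak{D})^{\langle\mu\rangle}$; the ring $\mathfrak{D}/x_0^2\mathfrak{D}$ is free of rank $4$ over $k[x_2]$ on the basis $1,x_0,x_1,x_0x_1$ (using $x_0^2=0$ and $x_1^2=2x_0x_2-1$), and its invariant subring is read off directly. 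In either approach the delicate step is the elimination of $p$, which hinges on the nilpotence of $\bar v$---precisely the feature absent from the reduced fiber of $Y$, and the structural reason $\pi_2$ acquires a non-reduced fiber over the ramification point of $k[x_0^2]\hookrightarrow k[x_0]$.
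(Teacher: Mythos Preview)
Your proposal is correct and follows essentially the same approach as the paper for all four parts: the image-ideal computation for (a), the Slice Theorem for (b), the direct quotient for (c), and in (d) the explicit elimination of the generator $x_2^2$ from $\mathfrak{R}/(x_0^2)$ to obtain $(k[\bar v]/(\bar v^2))[\bar w]$ --- the paper carries out the identical elimination with the same variables (its $(\bar y,\bar z,\bar w)$ are your $(\bar v,\bar w,\bar p)$). Your worry about controlling the full relation ideal is legitimate but applies equally to the paper's argument, which simply asserts $\bar{\mathfrak{R}}=\kappa[\bar z]\cong\kappa^{[1]}$ after eliminating $\bar w$; your alternative via $(\mathfrak{D}/x_0^2\mathfrak{D})^{\langle\mu\rangle}$ is a nice certification the paper does not offer.
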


\begin{proof} 
Part (a). We have:
\[
\mathfrak{D}=k[x_0,x_1,x_2]/(2x_0x_2-x_1^2-1) \quad\text{and}\quad 
\mathfrak{R}=k[x_2^2, x_1x_2,x_0x_2,x_1^2,x_0x_1,x_0^2]
\]
 For $Y$, the ideal $I_Y$ defining the fixed points is the ideal generated by the image of $\delta^{\prime}$.
 Therefore, $I_Y=(x_0,x_1)=(1)$ and the $\G_a$-action on $Y$ is fixed-point free. 
 
 Likewise, for $X$, the ideal $I_X$ defining the fixed points is the ideal generated by the image of $\delta$.
 The action of $\delta$ on $\mathfrak{R}$ is given by:
\begin{equation}\label{delta}
x_2^2\to 2x_1x_2\to 2(x_0x_2+x_1^2)\to 6x_0x_1\to 6x_0^2
\end{equation}
Moreover, $x_0x_2+x_1^2=3x_0x_2-1=3(2(x_0^2)(x_2^2)-(x_0x_1)(x_1x_2))-1$.
Therefore, 
\[
I_X=(x_1x_2,x_0x_2+x_1^2,x_0x_1,x_0^2)=(1)
\]
and the $\G_a$-action on $X$ is fixed-point free. 
\medskip

Part (b). Since $x_0$ is a local slice for $\delta^{\prime}$ we have $\mathfrak{D}_{x_0}=k[x_0]_{x_0}^{[1]}$ by the Slice Theorem. 
Similarly, since $x_0^2$ is a local slice for $\delta$ we have $\mathfrak{R}_{x_0^2}=k[x_0^2]_{x_0^2}^{[1]}$.
\medskip

Part (c). The fiber $\pi_1^{-1}(0)$ is defined by the ideal $(x_0)$ in $\mathfrak{D}$. We see that:
\[
\mathfrak{D}/(x_0)\cong k[x_1]/(x_1^2+1)[x_2]\cong_kk[x_1]/(x_1^2+1)^{[1]}
\]

Part (d). The fiber $\pi_2^{-1}(0)$ is defined by the ideal $(x_0^2)$ in $\mathfrak{R}$. Define:
\[
x=x_0^2\,\, ,\,\, y=x_0x_1\,\, ,\,\, z=x_1x_2\,\, ,\,\, w=x_2^2\,\, ,\,\, p=2xw-yz
\]
Then $\mathfrak{R}=k[x,y,z,w]$. Note the following relations in $\mathfrak{R}$:
\[
y^2=x(2p-1)\,\, ,\,\, z^2=w(2p-1)\,\, ,\,\, yw=zp
\]
Set $\bar{\mathfrak{R}}=\mathfrak{R}/(x)=k[\bar{y},\bar{z},\bar{w}]$. The induced relations are:
\[
\bar{y}^2=0\,\, ,\,\, \bar{z}^2=-\bar{w}(2\bar{y}\bar{z}+1)\,\, ,\,\, \bar{y}\bar{w}=-\bar{y}\bar{z}^2
\]
Set $\kappa = k[\bar{y}]$ and write $\bar{\mathfrak{R}}=\kappa [\bar{z},\bar{w}]$. We have:
\[
\bar{z}^2=-2(\bar{y}\bar{w})\bar{z}-\bar{w}=2\bar{y}\bar{z}^3-\bar{w} \implies \bar{w}=2\bar{y}\bar{z}^3-\bar{z}^2
\]
Therefore, $\bar{\mathfrak{R}}=\kappa [\bar{z}]\cong_k\kappa^{[1]}$. 
\end{proof}

\subsection{Relations for $X$}\label{4-gen}

In Example 1 of \cite{Bandman.Makar-Limanov.05}, Bandman and Makar-Limanov realize $X$ as a subvariety of $\A_k^5$ as follows. Let $k[t,x,y,z,u]=k^{[5]}$. Then $X$ is defined by the prime ideal $J$ generated by the following six polynomials:
\[
y^2-x(z-1)\,\, ,\,\, yt-z(z-1)\,\, ,\,\, t^2-u(z-1) \,\, ,\,\, yu-tz\,\, ,\,\, yz-xt \,\,,\,\, xu-z^2
\]
Since the variable $z+(yt-xu)=(yt-z(z-1))-(xu-z^2)$ belongs to $J$, we see that $X$ is also a subvariety of $\A_k^4$. 
Let $P=xu-yt$. Then the ideal in $k[t,x,y,u]$ defining $X$ is generated by:
\[
y^2-x(P-1)\,\, ,\,\, yt-P(P-1)\,\, ,\,\, t^2-u(P-1)\,\, ,\,\, yu-tP\,\, ,\,\, tx-yP
\]
\subsection{Fundamental pairs for $\mathfrak{D}$ and $\mathfrak{R}$}
We have $k[\V_2(k)]=k[X_0,X_1,X_2]\cong_kk^{[3]}$ with fundamental pair:
\[
\textstyle D_2=X_0\frac{\partial}{\partial X_1}+X_1\frac{\partial}{\partial X_2}\quad\text{and}\quad 
U_2=2X_2\frac{\partial}{\partial X_1}+2X_1\frac{\partial}{\partial X_0}
\]
The ring of invariants is $k[2X_0X_2-X_1^2]$. Let $(\delta^{\prime},\upsilon^{\prime})$ be the induced fundamental pair on:
\[ 
\mathfrak{D}=k[x_0,x_1,x_2]=k[X_0,X_1,X_2]/(2X_0X_2-X_1^2-1)
\]
The action of $\langle\mu\rangle\cong\Z_2$ on $\mathfrak{D}$ commutes with $(\delta^{\prime},\upsilon^{\prime})$, meaning that $(\delta^{\prime},\upsilon^{\prime})$ restricts to a fundamental pair
$(\delta ,\upsilon )$ on:
\[ 
\mathfrak{R}=\mathfrak{D}^{\langle\mu\rangle}=k[x_0^2,x_0x_1,x_0x_2,x_1^2,x_1x_2,x_2^2]
\]

\subsection{Three theorems} 
Given $f\in\krn\delta' =k[x_0]$, $\Delta_f\in{\rm Aut}_k(\mathfrak{D})$ is $\Delta_f=\exp (f\delta')$. 
Define $\tau\in {\rm Aut}_k(\mathfrak{D})$ by $\tau =(x_2,x_1,x_0)$
and let $G=\langle \tau ,\Delta_f\, |\, f\in k[x_0]\rangle$. 
The following is Daigle's Transitivity Theorem  applied to the ring $\mathfrak{D}$. 
\begin{theorem}\label{Daigle} 
$G$ acts transitively on ${\rm KLND}(\mathfrak{D})$.
\end{theorem}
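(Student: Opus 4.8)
The plan is to identify $\mathfrak{D}$ with a Danielewski surface $xy=P(z)$ of the form to which Daigle's theorem in \cite{Daigle.03a} applies, and to arrange the identification so that the generators $\tau$ and $\{\Delta_f\}$ of $G$ are carried \emph{exactly} to Daigle's involution and exponentials. First I would set
\[
x=x_0,\qquad y=x_2,\qquad z=x_1 .
\]
Because $2x_0x_2-x_1^2-1=0$ in $\mathfrak{D}$, we have $x_0x_2=\tfrac{1}{2}(x_1^2+1)$, so this substitution gives a $k$-algebra isomorphism
\[
\varphi:\mathfrak{D}\;\xrightarrow{\ \sim\ }\;k[x,y,z]/\big(xy-P(z)\big),\qquad P(z)=\tfrac{1}{2}(z^2+1).
\]
The crucial bookkeeping point is to leave the factor $\tfrac{1}{2}$ inside $P$ rather than absorbing it by rescaling $y$; with the substitution above $\deg_z P=2$ and $P$ is separable (so the surface is smooth), which is what Daigle's hypotheses require.

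Next I would check that $\varphi$ carries the generators of $G$ to Daigle's. Since $\tau=(x_2,x_1,x_0)$, we get $\varphi\tau\varphi^{-1}:(x,y,z)\mapsto(y,x,z)$, which is precisely Daigle's involution. For the unipotent part, $\krn\delta'=k[x_0]=k[x]$, and a direct computation gives $\varphi\delta'\varphi^{-1}(x)=0$, $\varphi\delta'\varphi^{-1}(z)=x$ and $\varphi\delta'\varphi^{-1}(y)=z=P'(z)$; that is, $\delta'$ is carried to the standard locally nilpotent derivation $d$ of the Danielewski surface with kernel $k[x]$. Hence $\Delta_f=\exp(f\delta')$ with $f\in k[x_0]$ is carried to $\exp(fd)$ with $f\in k[x]$, the family used by Daigle. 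Consequently $\varphi$ maps $G=\langle\tau,\Delta_f\mid f\in k[x_0]\rangle$ isomorphically onto the subgroup generated by Daigle's involution and his exponentials.

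Finally, an isomorphism of affine $k$-domains induces a bijection of the associated sets ${\rm KLND}$ which is equivariant with respect to the induced isomorphism of automorphism groups; therefore transitivity of $G$ on ${\rm KLND}(\mathfrak{D})$ is equivalent to transitivity of $\varphi G\varphi^{-1}$ on ${\rm KLND}\big(k[x,y,z]/(xy-P(z))\big)$. The latter is exactly the content of Daigle's Transitivity Theorem \cite{Daigle.03a}. I expect the only real obstacle to be verification rather than invention: one must confirm that the precise hypotheses of \cite{Daigle.03a} hold in the present generality — the degree condition $\deg_z P\ge 2$, separability of $P$, and any requirement on the base field such as algebraic closedness — and that the generator correspondence is genuinely exact, which is the reason for choosing $\varphi$ so that $\tau$ becomes the literal coordinate swap rather than a swap composed with a torus element.
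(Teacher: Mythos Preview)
Your proposal is correct and is exactly the approach the paper takes: the paper does not prove this statement at all but simply records it as ``Daigle's Transitivity Theorem applied to the ring $\mathfrak{D}$,'' and your argument is the explicit verification that $\mathfrak{D}\cong k[x,y,z]/(xy-P(z))$ with $P(z)=\tfrac12(z^2+1)$ in such a way that $\tau$ and the $\Delta_f$ correspond to Daigle's generators. One small point: in the paper's description of Daigle's derivation the sign convention reads $d=P'(z)\partial_y-x\partial_z$, so your $\varphi\delta'\varphi^{-1}$ matches $-d$ rather than $d$; this is harmless since $\exp(f\delta')$ still runs over exactly $\{\exp(gd):g\in k[x]\}$ as $f$ ranges over $k[x]$.
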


Wright gives the following in \cite{Wright.78}, Proposition 2.
\begin{theorem}\label{Wright} Let $G$ be a group with subgroups $A,B\subset G$ such that $G\cong A\ast_CB$, where $C=A\cap B$. 
Let $H\subseteq G$ be a subgroup and write $A'=A\cap H$ and $B'=C\cap H$. The following are equivalent.
\begin{enumerate}[label=(\roman*)]
\item $H$ is generated by $A'$ and $B'$.
\item $H=A'\ast_{C'}B'$ where $C'=A'\cap B'$.
\end{enumerate}
 \end{theorem}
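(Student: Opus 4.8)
The plan is to prove the nontrivial implication (i) $\Rightarrow$ (ii) by means of the Normal Form Theorem for amalgamated free products; the reverse implication (ii) $\Rightarrow$ (i) is immediate, since by definition an amalgamated free product is generated by its two factors. Throughout I take $A'=A\cap H$ and $B'=B\cap H$, so that
\[
C'=A'\cap B'=(A\cap B)\cap H=C\cap H .
\]

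First I would record the natural homomorphism. Since $A'$ and $B'$ are subgroups of $H\subseteq G$ whose restrictions to $C'=A'\cap B'$ agree (both being the inclusion into $G$), the universal property of the amalgamated free product furnishes a homomorphism
\[
\varphi : A'\ast_{C'}B'\longrightarrow G
\]
whose image is the subgroup of $G$ generated by $A'$ and $B'$. Under hypothesis (i) this image is exactly $H$, so $\varphi$ surjects onto $H$. The content of the theorem is then that $\varphi$ is injective, i.e., that $A'$ and $B'$ generate their internal amalgam inside $G$ with no additional relations.

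The key step is injectivity, and here I would invoke the Normal Form Theorem for $G=A\ast_C B$: a reduced word $g_1g_2\cdots g_n$ with $n\ge 1$, whose factors lie alternately in $A\setminus C$ and $B\setminus C$, represents a nontrivial element of $G$. Given a nontrivial $w\in A'\ast_{C'}B'$, write it in reduced form $w=h_1h_2\cdots h_n$, where for $n\ge 2$ the $h_i$ lie alternately in $A'\setminus C'$ and $B'\setminus C'$. The decisive observation is the pair of inclusions
\[
A'\setminus C'\subseteq A\setminus C\qquad\text{and}\qquad B'\setminus C'\subseteq B\setminus C,
\]
which hold because an element of $A\cap H$ that fails to lie in $C\cap H$ already lies in $H$, hence fails to lie in $C$ itself, and symmetrically for $B$. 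Consequently $\varphi(w)=h_1\cdots h_n$, read in $G$, is itself a reduced word in $A\ast_C B$, so $\varphi(w)\neq 1$ by the Normal Form Theorem (the case $n=1$ being trivial). Thus $\varphi$ is injective, and together with surjectivity this identifies $H$ with $A'\ast_{C'}B'$, yielding (ii).

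I expect the only real obstacle to be bookkeeping rather than substance: one must verify that a reduced form in the subgroup amalgam is transported to a genuinely reduced form in the ambient amalgam, which is precisely what the two displayed inclusions guarantee. An alternative, more conceptual route avoids quoting the Normal Form Theorem by letting $H$ act on the Bass--Serre tree $T$ of $G=A\ast_C B$: the $H$-stabilizers of the base edge and of its two endpoints are $C'$, $A'$ and $B'$, and hypothesis (i) forces the quotient graph of groups $H\backslash T$ to be the single segment carrying these groups, whence Bass--Serre theory gives $H=A'\ast_{C'}B'$. I would present the normal-form argument as the primary proof, since it is elementary and self-contained.
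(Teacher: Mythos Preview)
Your proof is correct. Note, however, that the paper does not supply its own proof of this statement: it is quoted as Proposition~2 of Wright's article \cite{Wright.78} and used as a black box (in the proof of {\it Theorem\,\ref{aut-group}}, part~(b)). So there is no ``paper's proof'' to compare against; you have filled in what the paper leaves to the reference.

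A couple of remarks. First, you correctly silently repaired the typo in the statement: the paper writes $B'=C\cap H$, but of course $B'=B\cap H$ is intended (otherwise $B'\subseteq A'$ and the statement is vacuous). Second, your argument via the Normal Form Theorem is exactly the standard one, and the crucial observation --- that $A'\setminus C'\subseteq A\setminus C$ and $B'\setminus C'\subseteq B\setminus C$, so that a reduced word for $A'\ast_{C'}B'$ is already reduced in $A\ast_C B$ --- is precisely what makes the proof go through. Your alternative Bass--Serre sketch is also fine, though it needs a bit more care than you indicate: hypothesis~(i) does not by itself force the quotient graph to be a single edge, but rather one shows that the subtree spanned by the $H$-orbit of the base edge has this quotient; this is the content of the Kurosh-type subgroup theorem for amalgams. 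The normal-form argument you give as primary is clean and self-contained, and is the right choice here.
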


In \cite{Bandman.Makar-Limanov.01}, Bandman and Makar-Limanov study the class of all smooth affine surfaces $S$ over $\C$ with $ML(S)=\C$.
They show the following.
\begin{theorem}\label{Band-ML} Let $S$ be a smooth affine surfaces over $\C$ with $ML(S)=\C$. 
The following conditions are equivalent.
\begin{enumerate}[label=(\roman*)]
\item $S$ is isomorphic to a hypersurface in $\A_{\C}^3$. 
\item $S$ is isomorphic to $\{ (x,y,t)\in\A_k^3\, |\, xy=p(t)\}$ for some $p\in k[t]$ with simple roots.
\item $S$ admits a fixed-point free $\G_a$-action with reduced fibers.
\end{enumerate}
\end{theorem}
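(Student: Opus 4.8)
The plan is to prove the four implications $(ii)\Rightarrow(i)$, $(ii)\Rightarrow(iii)$, $(iii)\Rightarrow(ii)$ and $(i)\Rightarrow(ii)$, which together yield $(i)\Leftrightarrow(ii)\Leftrightarrow(iii)$. The implication $(ii)\Rightarrow(i)$ is immediate, since $\{xy=p(z)\}$ is by definition a hypersurface in $\A_{\C}^3$. For $(ii)\Rightarrow(iii)$ I would exhibit the action explicitly: on $B=\C[x,y,z]/(xy-p(z))$ take the derivation $d=p'(z)\partial_y-x\partial_z$ of the Introduction, so that $dx=0$, $dy=p'(z)$, $dz=-x$ and $\krn d=\C[x]$. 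The fixed-point scheme is cut out by $(dB)=(x,p'(z))$, and on $S$ the locus $x=0$ forces $p(z)=0$; a fixed point would thus require a common zero of $p$ and $p'$, which is excluded when $p$ has simple roots, so the action is fixed-point free. The quotient is $\pi=x\colon S\to\A_{\C}^1$; the fiber over $x=a\ne0$ is the reduced line $\{y=p(z)/a\}$, while $\pi^{-1}(0)$ has coordinate ring $\big(\C[z]/(p(z))\big)[y]$, which is reduced precisely because $p$ is squarefree. This is exactly the mechanism isolated in Lemma~\ref{fibers}: a double root would produce the non-reduced fiber observed there for $X$.

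For the substantive direction $(iii)\Rightarrow(ii)$, let $D$ be the given derivation and set $A=\krn D$. Since $ML(S)=\C$ the surface $S$ is rational with $\mathcal{O}(S)^*=\C^*$, so $A\cong\C^{[1]}$, say $A=\C[x]$, and $\pi=x\colon S\to\A_{\C}^1$ is an $\A^1$-fibration with reduced fibers and no fixed points. Triviality of the Makar-Limanov invariant furnishes a second locally nilpotent derivation $D'$ with $\krn D'=\C[y]$ and $\C[x]\cap\C[y]=\C$, i.e.\ a transversal $\A^1$-fibration. Each degenerate fiber of $\pi$ is then a reduced disjoint union of affine lines, and I would select a function $z$ whose restriction to the special fibers separates these components. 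The core claim is that $x,y,z$ generate $B$ subject to a single relation $xy=p(z)$, with the roots of $p$ in bijection with the components of the degenerate fibers; reducedness of the fibers then forces these roots to be simple, giving $(ii)$. For $(i)\Rightarrow(ii)$ one runs the same analysis starting from a smooth hypersurface $S=V(F)\subset\A_{\C}^3$ and an irreducible $D\in{\rm LND}(B)$; the additional input is that $B$ has only three generators, which through filtration and degree estimates on $D$ pins the defining relation to the Danielewski form $xy=p(z)$, and smoothness forces simple roots, since the Jacobian criterion fails exactly at the points $(0,0,z_0)$ with $p(z_0)=p'(z_0)=0$.

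I expect the main obstacle to be the reconstruction step common to $(iii)\Rightarrow(ii)$ and $(i)\Rightarrow(ii)$: showing that the coordinate ring of an $ML$-trivial smooth surface carrying such a fibration is generated by the three functions $x,y,z$ with the single relation $xy=p(z)$, and no extra generators. Controlling how the two transversal $\A^1$-fibrations glue over the special fibers, and bounding the generation of $B$, is where the real work lies; this is the heart of the classification of Bandman and Makar-Limanov \cite{Bandman.Makar-Limanov.01}, on which I would rely for the detailed structure theory.
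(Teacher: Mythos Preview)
The paper does not prove this theorem at all: it is quoted from \cite{Bandman.Makar-Limanov.01}, with the remark that the equivalence of (i) and (iii) comes from combining Theorem~1 and Theorem~2 of that paper, and the equivalence of (i) and (ii) from the proof of Lemma~5 there. So there is nothing to compare your argument to beyond a citation.

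Your outline is therefore more ambitious than what the paper offers, and its overall architecture is the right one. A few points are worth flagging. First, your derivation has a sign slip: with $dx=0$, $dy=p'(z)$, $dz=-x$ one gets $d(xy-p(z))=2xp'(z)\ne 0$, so $d$ does not descend; you want $dz=x$ (or flip the sign on $dy$). Second, in $(iii)\Rightarrow(ii)$ you assert $\mathcal{O}(S)^*=\C^*$ from $ML(S)=\C$ alone; this is true but requires an argument (it is part of what Bandman and Makar-Limanov establish). Third, the step you correctly identify as the obstacle, namely producing a third generator $z$ and showing $B=\C[x,y,z]$ with a single relation $xy=p(z)$, is genuinely the content of the cited paper, and your sketch does not supply an independent mechanism for it. Since you end by invoking \cite{Bandman.Makar-Limanov.01} for exactly this structure theory, your proposal and the paper's treatment ultimately rest on the same source; the paper simply cites, while you add the easy implications and a plausible roadmap for the hard ones.
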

The equivalence of conditions (i) and (iii) is gotten by combining Theorem 1 and Theorem 2. The equivalence of conditions (i) and (ii) is found in the proof of Lemma 5. 
Though stated for $k=\C$, this result holds more generally for any algebraically closed field $k$ of characteristic zero. 


\section{Proof of {\it Theorem\,\ref{non-extend}}}\label{non-extend-proof}

{\bf Part (a).} 
Let $\mathfrak{R}=\bigoplus_{d\in\Z}\mathfrak{R}_d$ be the $\Z$-grading induced by $(\delta ,\upsilon )$, and let $A=\krn\delta$ with $\N$-grading $A=\bigoplus_{d\in\N}A_d$. 
Since $2x_0x_2-x_1^2=1$ and $\det \begin{pmatrix} 1&1\cr 2&-1\end{pmatrix}=-3$, we see from line (\ref{delta}) that:
\[
\mathfrak{R}=k[x_0^2, x_0x_1,x_0x_2+x_1^2,x_1x_2,x_2^2]=k[\widehat{x_0^2}]
\]
Since $\deg (x_0^2)=4$ there is an equivariant surjection $\varphi :k[\V_4(k)]\to \mathfrak{R}$. This proves part (a). 

\medskip

{\bf Part (b).} This follows from our observation in {\it Section\,\ref{4-gen}} that $\mathfrak{R}$ is generated by four elements as a $k$-algebra. 

\medskip

{\bf Part (c).} The fundamental pair on $k[\V_4(k)]$ is $(D_4, U_4)$, as defined above. 
Set $\tilde{A}=\krn (D_4)=\bigoplus_{d\in\N}\tilde{A}_d$. Then $\tilde{A}=k[x_0,f,g,q,r]$ where 
$q,r\in A_0$, $x_0,f\in\tilde{A}_4$ and $g\in\tilde{A}_6$; see \cite{Freudenburg.17}, Example 8.12. 
Consequently, $\tilde{A}_d=0$ for odd $d$ and for $d=2$. Since $A_d=\varphi (\tilde{A}_d)$ for all $d\in\N$, it follows that:
\begin{equation}\label{property}
A_d=0 \,\,\text{\it for odd $d$ and for}\,\, d=2\, , \, \text{\it and}\,\, A_0=k.
\end{equation}
According to Panyushev \cite{Panyushev.84}, every algebraic $SL_2(k)$-action on $\A_k^4$ is equal (in some coordinate system) to the action induced by an $SL_2$-module. 
There are four non-trivial $SL_2$-modules of dimension four, namely:
\[
\V_3(k)\,\, ,\,\, \V_1(k)\oplus \V_1(k)\,\, ,\,\, \V_0(k)\oplus \V_2(k)\,\, ,\,\, 2\V_0(k)\oplus \V_1(k)
\]
Assume that $\tilde{X}\subset\A_k^4$ is a subvariety isomorphic to $X$. 
Then there exists an equivariant surjection $\psi : k[W]\to\mathfrak{R}$, where $W$ is one of these four modules. 
Let $(\mathcal{D},\mathcal{U})$ be the fundamental pair on $k[W]$ induced by $W$ and set $\mathcal{A}=\krn\mathcal{D}$ with 
$\N$-grading $\mathcal{A}=\bigoplus_{d\in\N}\mathcal{A}_d$. 

Assume that $W=\V_3(k)$. Then $\mathcal{A}=k[X_0,f,g,h]$ where $h\in\mathcal{A}_0$, $f\in\mathcal{A}_2$, $X_0,g\in\mathcal{A}_3$ and $X_0^2h=f^3+g^2$. 
See \cite{Freudenburg.17}, Example 8.11. 
By (\ref{property}) we have $A_3=\psi (\mathcal{A}_3)=0$, so $f^3=X_0^2h-g^2\in\krn\psi$, which implies $f\in\krn\psi$. Moreover, $\psi (h)\in A_0=k$.
Therefore, $A=\psi (\mathcal{A})\subseteq k$ gives a contradiction. So this case cannot occur.

Assume that $W=\V_1(k)\oplus \V_1(k)$. Then $\mathcal{A}=k[X_0,Y_0,P]$ where $X_0,Y_0\in\mathcal{A}_1$ and $P=X_0Y_1-X_1Y_0\in\mathcal{A}_0$.
By (\ref{property}) we have $A_1=\psi (\mathcal{A}_1)=0$. Since $P\in (X_0,Y_0)$, we see that $A=\psi (\mathcal{A})\subseteq k$, which gives a contradiction. So this case cannot occur.

Assume that $W=2\V_0(k)\oplus \V_1(k)$. Then $\mathcal{A}=k[S, T, X_0]$ where $S,T\in\mathcal{A}_0$ and $X_0\in\mathcal{A}_1$. 
By (\ref{property}) we have $A_1=\psi (\mathcal{A}_1)=0$, so $\psi (X_0)=0$ and $\psi (S),\psi (T)\in k$.
This implies $A=\psi (\mathcal{A})\subseteq k$, a contradiction. So this case cannot occur.

Assume that $W=\V_0(k)\oplus \V_2(k)$. Then $\mathcal{A}=k[S, X_0,f]$ where $S,f\in\mathcal{A}_0$ and $X_0\in\mathcal{A}_2$. 
By (\ref{property}) we have $A_2=\psi (\mathcal{A}_2)=0$.
Therefore, $\psi (X_0)\in A_2=0$, which implies $A=\psi (\mathcal{A})\subseteq k$, a contradiction. So this case cannot occur. 

Therefore, no such mapping $\psi$ exists. This proves part (c).

\medskip

{\bf Part (d).} 
Since $X$ is a homogeneous space for $SL_2(k)$, it follows that $X$ is smooth. In addition, 
\[
k\subseteq ML(\mathfrak{R})\subseteq\krn\delta\cap\krn\upsilon=A_0=k
\]
so $ML(\mathfrak{R})=k$. It was shown in \cite{Freudenburg.24}, Proposition 5.3, that $X$ is not isomorphic to a hypersurface of the form $xy=p(z)$ for $p(z)\in k[z]$. 
{\it Theorem\,\ref{Band-ML}(a)} thus implies that $X$ is not algebraically isomorphic to a hypersurface in $\A_k^3$. 

\medskip

{\bf Part (e).} This follows from ${\rm Cl}(Y\times\A_k^1)={\rm Cl}(Y)\cong\Z$ and ${\rm Cl}(X\times\A_k^1)={\rm Cl}(X)\cong\Z_2$; see
\cite{Fauntleroy.Magid.78}. 

\medskip

This completes the proof of {\it Theorem\,\ref{non-extend}}. 


\section{Proof of {\it Theorem\,\ref{transitive}}}\label{transitive-proof}

\begin{lemma}\label{D-trans} $\mathfrak{D}$ has the strong transitivity property.
\end{lemma}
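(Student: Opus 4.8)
The plan is to deduce strong transitivity from Daigle's Transitivity Theorem by analyzing the irreducible locally nilpotent derivations that share a fixed kernel. Strong transitivity requires $\mathrm{Aut}_k(\mathfrak{D})$ to act transitively on $\mathrm{ILND}(\mathfrak{D})$, so it suffices to show that every irreducible locally nilpotent derivation of $\mathfrak{D}$ is conjugate to $\delta'$.

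First I would reduce to a single kernel. By Theorem \ref{Daigle}, the group $G\subseteq\mathrm{Aut}_k(\mathfrak{D})$ acts transitively on $\mathrm{KLND}(\mathfrak{D})$, and $\krn\delta'=k[x_0]$. Hence, given $D\in\mathrm{ILND}(\mathfrak{D})$, there is $\alpha\in G$ with $\alpha(\krn D)=k[x_0]$; replacing $D$ by the (still irreducible) conjugate $\alpha D\alpha^{-1}$, I may assume $\krn D=k[x_0]=\krn\delta'$. Next I would classify the irreducible locally nilpotent derivations with kernel $A=k[x_0]$, showing each is a nonzero scalar multiple of $\delta'$. Two such derivations are proportional over $\mathrm{frac}(A)$ (see \cite{Freudenburg.17}), so $gD=f\delta'$ for some coprime $f,g\in k[x_0]$. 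By Lemma \ref{fibers}(a) the image $\delta'\mathfrak{D}$ generates the unit ideal (indeed $(x_0,x_1)=(1)$), so the ideal generated by $f\delta'\mathfrak{D}$ is $(f)$; comparing with the ideal generated by $gD\mathfrak{D}$ and using that $\krn\delta'$ is factorially closed in $\mathfrak{D}$, together with the irreducibility of $D$, forces $f,g\in k^*$. Thus $D=c\delta'$ for some $c\in k^*$.

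Finally I would conjugate $c\delta'$ to $\delta'$ by a linear automorphism. The diagonal map $h_a=\mathrm{diag}(a,1,a^{-1})$ preserves the form $2x_0x_2-x_1^2$, hence lies in $\mathbf{L}\cong O_3(k)\subseteq\mathrm{Aut}_k(\mathfrak{D})$, and a direct computation gives $h_a\delta' h_a^{-1}=a\delta'$. Taking $a=c^{-1}$ carries $c\delta'$ to $\delta'$, which completes the argument. When $k=\bar{k}$ this last step is also subsumed by Lemma \ref{TP-STP}, since $\mathfrak{D}$ carries the nontrivial $\Z$-grading induced by $[\delta',\upsilon']$.

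The step I expect to require the most care is the scaling over a non-closed field. Conjugation by the image of $SL_2(k)$ inside $\mathrm{Aut}_k(\mathfrak{D})$ scales $\delta'$ only by squares $\lambda^2$, so it is essential that the \emph{full} linear group $\mathbf{L}\cong O_3(k)$, rather than merely $PSL_2(k)$, supplies the automorphism $h_a$ for every $a\in k^*$. The other point demanding attention is the proportionality-and-factorial-closedness bookkeeping of the second step, carried out over the non-factorial ring $\mathfrak{D}$, where one must argue with ideals rather than with unique factorization.
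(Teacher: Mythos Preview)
Your proposal is correct and follows essentially the same route as the paper: reduce via Daigle's transitivity to $\ker D=k[x_0]$, show $D=c\delta'$ for some $c\in k^*$, then conjugate by the diagonal automorphism $(x_0,x_1,x_2)\mapsto(cx_0,x_1,c^{-1}x_2)$. The only difference is cosmetic: the paper packages the middle step as Lemma~\ref{TP-STP}(a) (valid over any $k$, using a B\'ezout identity in $A=k[x_0]$), whereas you argue it directly via fixed-point-freeness and factorial closedness; both arguments are valid and yield the same conclusion.
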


\begin{proof}  Let $D\in {\rm LND}(\mathfrak{D})$ be irreducible. By {\it Theorem\,\ref{Daigle}}, $\mathfrak{D}$ has the transitivity property. 
{\it Lemma\,\ref{TP-STP}} implies that there exists $\alpha\in {\rm Aut}_k(\mathfrak{D})$ and $c\in k^*$ so that $\alpha D\alpha^{-1}=c\delta'$. 
Define $\gamma\in{\rm Aut}_k(\mathfrak{D})$ by:
\[ 
\gamma (x_0,x_1,x_2)=(cx_0,x_1,c^{-1}x_2)
\]
Then $\gamma\delta'\gamma^{-1}=c\delta'$.  It follows that $D$ is conjugate to $\delta'$. 
\end{proof}

\begin{lemma}\label{derivations} With the hypotheses and notation above:
\begin{itemize} 
\item [{\bf (a)}] Every element of ${\rm Der}_k(\mathfrak{R})$ extends uniquely to an element of ${\rm Der}_k(\mathfrak{D})$. 
\item [{\bf (b)}] Every element of ${\rm LND}(\mathfrak{R})$ extends uniquely to an element of ${\rm LND}(\mathfrak{D})$. 
\item [{\bf (c)}] Every element of ${\rm ILND}(\mathfrak{R})$ extends uniquely to an element of ${\rm ILND}(\mathfrak{D})$. 
\end{itemize}
\end{lemma}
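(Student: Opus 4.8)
My plan is to build the extension one structural property at a time: first as a derivation, then as a locally nilpotent derivation, and finally verifying irreducibility.

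For part (a), the plan is to extend through the fraction fields. Since ${\rm char}\,k=0$ and $\mathfrak{R}=\mathfrak{D}^{\langle\mu\rangle}$, the field ${\rm frac}(\mathfrak{D})$ is a degree-two, hence finite separable, extension of ${\rm frac}(\mathfrak{R})$. A $k$-derivation $D$ of $\mathfrak{R}$ extends uniquely to ${\rm frac}(\mathfrak{R})$ and then, by separability, uniquely to a derivation $\tilde{D}$ of ${\rm frac}(\mathfrak{D})$; this already yields uniqueness of any derivation extension to $\mathfrak{D}$. For existence it suffices, since $\mathfrak{D}=k[x_0,x_1,x_2]/(2x_0x_2-x_1^2-1)$, to show $\tilde{D}(x_i)\in\mathfrak{D}$. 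Writing $a_i=\tilde{D}(x_i)$ and differentiating the relations $x_i^2\in\mathfrak{R}$ and $x_ix_j\in\mathfrak{R}$, I get for each $a_i$ two expressions as fractions whose denominators are powers of $x_j$ and of $x_\ell$ for a suitable pair $j\neq\ell$. Because $Y={\rm Spec}(\mathfrak{D})$ is smooth, $\mathfrak{D}$ is normal; and no two of $x_0,x_1,x_2$ vanish along a common curve on $Y$ (e.g. $x_0=x_1=0$ forces $-1=0$, while $x_0=x_2=0$ holds only where $x_1^2=-1$, an isolated locus). Hence the two bounds confine the poles of $a_i$ to divisors with no common component, forcing $a_i\in\mathfrak{D}$. (Equivalently, $\mathfrak{D}$ is the integral closure of $\mathfrak{R}$ in ${\rm frac}(\mathfrak{D})$, and one may instead invoke Seidenberg's theorem.)

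For part (b), the plan is short: by (a), $D\in{\rm LND}(\mathfrak{R})$ extends uniquely to $\tilde{D}\in{\rm Der}_k(\mathfrak{D})$; since $\tilde{D}$ restricts to the locally nilpotent derivation $D$ of $\mathfrak{R}$ and $\mathfrak{D}$ is integral over $\mathfrak{R}$, {\it Theorem\,\ref{Vasconcelos}} gives $\tilde{D}\in{\rm LND}(\mathfrak{D})$, with uniqueness inherited from (a).

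For part (c) I first reduce to a content computation. Because $\mu$ fixes $\mathfrak{R}$ pointwise, $\mu\tilde{D}\mu^{-1}$ again extends $D$, so by uniqueness $\mu\tilde{D}\mu^{-1}=\tilde{D}$; thus the $\G_a$-action descends along the \'etale double cover $\pi:Y\to X$. Using that $(x_0,x_1,x_2)$ is the unit ideal of $\mathfrak{D}$, I would check $(\tilde{D}\mathfrak{D})=(D\mathfrak{R})\mathfrak{D}$, so that the fixed schemes satisfy ${\rm Fix}(\tilde{D})=\pi^{-1}({\rm Fix}(D))$ and their divisorial (content) parts satisfy $E_Y=\pi^*E_X$. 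As $\pi^*$ is injective on divisors, $E_X=0\iff E_Y=0$, and since an LND with vanishing content divisor lies in no proper principal ideal, it suffices to prove $E_X=0$. Here $D$ is irreducible and ${\rm Cl}(X)\cong\Z_2$: if $E_X\neq0$, then no nonzero effective principal divisor lies below $E_X$, which forces $E_X$ to be a single prime divisor $C$ of the nontrivial class, of multiplicity one — since a repeated component, a component of trivial class, or two distinct nontrivial components would each give a principal sub-divisor and make $D$ reducible.

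The main obstacle is excluding this last possibility $E_X=C$. I would lift it to $\mathfrak{D}$: write $\tilde{D}=g\tilde{D}_0$ with $\tilde{D}_0$ irreducible and $g\in\krn\tilde{D}$, and deduce $\mu(g)=\pm g$ from $\mu\tilde{D}\mu^{-1}=\tilde{D}$ together with triviality of units. If $\mu(g)=g$ then $g\in\mathfrak{R}$ descends and exhibits $D$ as reducible, a contradiction. The hard case is $\mu(g)=-g$, giving $\mu\tilde{D}_0\mu^{-1}=-\tilde{D}_0$; since $\tilde{D}_0$ is irreducible it is conjugate to $\delta'$ by {\it Lemma\,\ref{D-trans}}, so a conjugate of $\mu$ would be an involution inverting $\delta'$. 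I expect this to be the crux, and I would settle it by showing no involution inverting $\delta'$ can be fixed-point free: such a $\nu$ preserves $\krn\delta'=k[x_0]$ and acts there by $x_0\mapsto\epsilon x_0+c$, the shift $c$ must vanish because $\delta'$ has no slice (otherwise $\mathfrak{D}\cong_k k^{[2]}$), and a short computation then forces $\nu=(-x_0,x_1,-x_2)$, or else $\nu$ fixes the base of the quotient; in either case $\nu$ fixes a point of $Y$. Since $\mu$ acts freely on $Y$, this is impossible, so $g$ is a unit and $\tilde{D}$ is irreducible.
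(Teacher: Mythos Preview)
Your proof is correct, but parts (a) and (c) take genuinely different routes from the paper.

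For (a), the paper writes down an explicit $3\times 3$ matrix over $\mathfrak{R}$ giving $(\Theta x_0,\Theta x_1,\Theta x_2)$ and checks directly that it defines a derivation of $\mathfrak{D}$ restricting to $\theta$. Your argument via separability and normality (or Seidenberg) is more conceptual and equally valid; the paper's formula has the advantage of making the extension completely explicit.

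For (c), the paper's argument is much shorter but relies on a forward reference: it invokes {\it Theorem\,\ref{transitivity}} (proved later, using only parts (a) and (b) of this lemma) to assume $\krn\theta=k[x_0^2]$, then writes the extension as $f\delta'$, observes $f\in k[x_0^2]$ from $\theta(x_0x_1)\in k[x_0^2]$, and concludes $f\in k^*$ by irreducibility of $\theta=f\delta$. Your argument avoids the forward reference entirely, at the cost of bringing in ${\rm Cl}(X)\cong\Z_2$, the vanishing of $\pi^*:{\rm Cl}(X)\to{\rm Cl}(Y)$, and a fixed-point analysis of involutions inverting $\delta'$. Both arguments use {\it Lemma\,\ref{D-trans}}. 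One small imprecision: in the case $\nu(x_0)=-x_0$ your ``short computation'' does not actually force $\nu=(-x_0,x_1,-x_2)$ --- there is a one-parameter family $\nu=(-x_0,\,x_1+x_0b_1,\,-x_2-b_1x_1-\tfrac{1}{2}x_0b_1^2)$ with $b_1\in k[x_0^2]$ --- but every member of this family still has the nonempty fixed scheme $\{x_0=0,\,x_1^2=-1\}$ on $Y$, so your conclusion stands.
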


\begin{proof} Given $\theta\in{\rm Der}_k(\mathfrak{R})$ define $\Theta\in {\rm Der}_k(\mathfrak{D})$ by:
\[
\begin{pmatrix} \Theta x_0\cr \Theta x_1\cr \Theta x_2\end{pmatrix} = 
\begin{pmatrix} \textstyle\frac{1}{2}\theta (x_1^2) & -\theta (x_0x_1) & \theta (x_0^2)\cr
\theta (x_1x_2) & -\theta (x_1^2) & \theta (x_0x_1) \cr
\theta (x_2^2) & -\theta (x_1x_2) & \textstyle\frac{1}{2}\theta (x_1^2)\end{pmatrix}
\begin{pmatrix} x_0\cr x_1\cr x_2\end{pmatrix}
\]
The reader can check that $x_2\Theta (x_0)-x_1\Theta (x_1)+x_0\Theta (x_2)=0$. This shows that $\Theta$ is a well-defined derivation of $\mathfrak{D}$. The reader can also check that
\[
\Theta (x_0^2)=\theta (x_0^2) \,\, ,\,\, \Theta (x_0x_1)=\theta (x_0x_1)\,\, ,\,\, \Theta (x_1x_2)=\theta (x_1x_2) \,\, ,\,\, \Theta (x_2^2)=\theta (x_2^2)
\]
which shows that $\Theta$ extends $\theta$. Uniqueness follows from the fact that $\mathfrak{R}\subset \mathfrak{D}$ is an integral extension; see \cite{Freudenburg.17}, Proposition 6.19. This proves part (a). 

For part (b), suppose that $\theta\in {\rm LND}(\mathfrak{R})$. 
It follows from Vasconcelos's Theorem, {\it Theorem\,\ref{Vasconcelos}}, that $\Theta$ is locally nilpotent.

For part (c), suppose that $\theta\in {\rm LND}(\mathfrak{R})$ is irreducible. By {\it Theorem\,\ref{transitivity}} (below), there exists $\alpha\in{\rm Aut}_k(\mathfrak{R})$ such that:
\[
\krn\delta = k[x_0^2]=\alpha (\krn\theta )=\krn (\alpha\theta\alpha^{-1})
\]
Since $\alpha\theta\alpha^{-1}$ is also irreducible, we may assume, with no loss of generality, that $\krn\theta =k[x_0^2]$. 

Let $\Theta$ be the extension of $\theta$ to $\mathfrak{D}$. Then $\krn\Theta =k[x_0]$. Write $\Theta = gD$ for irreducible $D\in {\rm LND}(\mathfrak{D})$ and $g\in k[x_0]$. 
By {\it Lemmma\,\ref{D-trans}}, $\mathfrak{D}$ has the strong transitivity property. Since $\krn D=\krn\delta'$, we see that $D=c\delta'$ for some $c\in k^*$. 
So $\Theta =f\delta'$ for $f=cg$. We have:
\[
fx_0^2=f\delta'(x_0x_1)=\Theta(x_0x_1)=\theta (x_0x_1)\in k[x_0^2] \implies f\in k[x_0^2]
\]
It follows that:
\[
\theta = \Theta\vert_{\mathfrak{R}}=(f\delta')\vert_{\mathfrak{R}}=f(\delta'\vert_{\mathfrak{R}})=f\delta
\]
By irreducibility of $\theta$, $f\in k^*$, and $\Theta$ is irreducible. 
\end{proof}

Let $G=\langle \Delta_f,\tau \, |\, f\in k[x_0]\rangle\subset {\rm Aut}_k(\mathfrak{D})$.  
Daigle's Transitivity Theorem implies that $G$ acts transitively on ${\rm KLND}(\mathfrak{D})$. 
Note that $\tau$ restricts to $\mathfrak{R}$,  and that $\Delta_f$ restricts to $\mathfrak{R}$ if and only if $f\in k[x_0^2]$. 

\begin{theorem}\label{transitivity} Let $\Gamma\subset {\rm Aut}_k(\mathfrak{R})$ be generated by $\{ \Delta_f\, |\, f\in k[x_0^2]\}$ and $\tau\vert_{\mathfrak{R}}$. 
Then $\Gamma$ acts transitively on ${\rm KLND}(\mathfrak{R})$.
\end{theorem}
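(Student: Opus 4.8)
The plan is to transport the problem to $\mathfrak{D}$, apply Daigle's theorem (Theorem~\ref{Daigle}), and do so $\mu$-equivariantly. Since $\krn\theta$ depends only on the irreducible part of $\theta$, I may assume $\theta\in\mathrm{LND}(\mathfrak{R})$ is irreducible. Lemma~\ref{derivations}(b) gives its unique extension $\Theta\in\mathrm{LND}(\mathfrak{D})$; as $\mu$ fixes $\mathfrak{R}$ pointwise, $\mu\Theta\mu^{-1}$ is a locally nilpotent derivation of $\mathfrak{D}$ restricting to $\theta$, so uniqueness (Lemma~\ref{derivations}(a)) forces $\mu\Theta\mu^{-1}=\Theta$. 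Hence $\krn\Theta\in\mathrm{KLND}(\mathfrak{D})$ is $\mu$-stable and $\krn\theta=\krn\Theta\cap\mathfrak{R}$. A direct check shows $\Delta_f$ commutes with $\mu$ exactly when $f\in k[x_0^2]$ and that $\tau$ commutes with $\mu$; since an automorphism of $\mathfrak{D}$ commuting with $\mu$ preserves $\mathfrak{R}$, the subgroup $G_\mu:=\langle\Delta_f\ (f\in k[x_0^2]),\,\tau\rangle$ of Daigle's group $G$ restricts onto $\Gamma$. Because $k[x_0]\cap\mathfrak{R}=k[x_0^2]=\krn\delta$, it suffices to produce $g\in G_\mu$ with $g(\krn\Theta)=k[x_0]$: its restriction lies in $\Gamma$ and sends $\krn\theta$ to $\krn\delta$.

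Next I would determine the $\mu$-action on $\krn\Theta\cong k^{[1]}$. Writing $\krn\Theta=k[p]$, the involution $\mu$ acts on $k[p]$ either trivially or, after adjusting $p$, by $p\mapsto-p$. The first case cannot occur: since $\pi_1(X)=\Z_2$ (Masuda--Miyanishi), the double cover $Y\to X$ is \'etale, so $\mu$ acts freely on $Y$; but if $\mu$ fixed $k[p]$ pointwise then $\mu$ would act as a nontrivial fiberwise involution of the $\A^1_k$-fibration $Y\to\mathrm{Spec}\,k[p]$ (reduced generic fiber by the Slice Theorem, cf.\ Lemma~\ref{fibers}(b)), and any such involution has a section of fixed points, contradicting freeness. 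So $\mu(p)=-p$, matching the model $\delta'$, where $\krn\delta'=k[x_0]$ and $\mu(x_0)=-x_0$. By Daigle there is $\sigma\in G$ with $\sigma(\krn\Theta)=k[x_0]$, and since locally nilpotent derivations with the same kernel are proportional and $\mathfrak{D}^*=k^*$ (together with the strong transitivity of $\mathfrak{D}$, Lemma~\ref{D-trans}), the conjugate satisfies $\sigma\Theta\sigma^{-1}=f\delta'$ with $f\in k[x_0]$. The involution $\beta:=\sigma\mu\sigma^{-1}$ then stabilizes $k[x_0]$ and induces a reflection on $\mathrm{Spec}\,k[x_0]$; as the fiber of $\pi_1$ over $x_0=0$ is the unique reducible fiber (Lemma~\ref{fibers}(c)), $\beta$ must fix that base point, so $\beta$ and $\mu$ induce the same map $x_0\mapsto-x_0$ on the base.

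It remains to correct $\sigma$ by a stabilizer element so that it centralizes $\mu$ and lies in $G_\mu$. The discrepancy $\beta\mu$ now fixes $k[x_0]$ pointwise and commutes with the $\delta'$-flow, hence equals $\Delta_h=\exp(h\delta')$ for some $h\in k[x_0]$, and $\beta^2=1$ forces $h$ to be odd. Solving $g(x_0)-g(-x_0)=-h$ in characteristic zero (the averaging that trivializes the unipotent part) produces $\omega=\Delta_g\in\mathrm{Stab}_G(k[x_0])$ with $\omega\beta\omega^{-1}=\mu$, so $\omega\sigma$ centralizes $\mu$ and sends $\krn\Theta$ to $k[x_0]$. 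The main obstacle is precisely the final bookkeeping: here $g$ is \emph{odd}, so $\omega\sigma$ lands a priori only in the $\mu$-centralizer of $G$, not visibly in $G_\mu$, and one must guarantee that the reduction can be performed using only the even shears $\{\Delta_f:f\in k[x_0^2]\}$ and $\tau$. I expect this to require re-running Daigle's reduction algorithm directly on $\mathfrak{R}$ in a $\mu$-symmetric fashion rather than lifting a single $\sigma$; the inclusions $\Gamma\supseteq\langle E_L,\tau\rangle=PSL_2(k)$ and $\Gamma\supseteq E$ (whence $\Gamma\supseteq\mathcal{T}$) should supply exactly the generators needed to absorb the odd-shear corrections that arise.
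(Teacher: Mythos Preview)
Your opening moves coincide with the paper's: lift $\theta$ to $\Theta\in\mathrm{LND}(\mathfrak{D})$, observe $\mu\Theta\mu=\Theta$ by uniqueness of the extension, and invoke Daigle's theorem to get a conjugator $\alpha\in G$. The divergence---and the gap you yourself flag---comes at the end. Your correction $\omega\sigma$ centralizes $\mu$, hence restricts to an automorphism of $\mathfrak{R}$, but the theorem asks for an element of $\Gamma$, not merely of $\mathrm{Aut}_k(\mathfrak{R})$. Nothing you have written shows that the $\mu$-centralizer in $G$ equals $G_\mu=\langle\tau,\Delta_f:f\in k[x_0^2]\rangle$; a priori it could be strictly larger, and your odd shear $\Delta_g$ illustrates exactly how a product of non-$G_\mu$ generators can end up centralizing $\mu$. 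The suggestion to ``re-run Daigle's algorithm $\mu$-symmetrically'' is not a proof, and the observation that $\Gamma$ contains $PSL_2(k)$ and $\mathcal{T}$ does not help either: at this point of the paper you do not yet know that $\mathrm{Aut}_k(\mathfrak{R})$ is generated by these (that is the content of the \emph{next} theorem, whose proof uses the present one).

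The paper closes this gap with a single structural fact you never invoke: the amalgamated free product decomposition $\mathrm{Aut}_k(\mathfrak{D})\cong\mathbf{L}\ast_C\mathbf{T}$ of Danilov--Gizatullin. Write the Daigle conjugator as a word $\alpha=\tau\Delta_{f_n}\tau\cdots\tau\Delta_{f_1}\tau$. The identity $\mu\Theta\mu=\Theta$, after conjugating $\mu$ past each letter (using $\mu\tau\mu=\tau$ and $\mu\Delta_f\mu=\Delta_{{}^\mu f}$) and exponentiating, becomes a relation $1=(\text{word in }\tau,\Delta_{\pm f_i},\Delta_{\pm{}^\mu f_i})$ alternating between the two free factors. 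Normal form in the amalgamated product then forces the adjacent triangular letters to cancel, yielding ${}^\mu f_i=f_i$ for every $i$ (and ${}^\mu P=P$). Thus $\alpha$ itself already lies in $G_\mu$---no correction step is needed---and its restriction to $\mathfrak{R}$ is visibly in $\Gamma$. This is precisely the ``centralizer $=G_\mu$'' statement your approach was missing, and the free-product normal form is what supplies it.
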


\begin{proof} Let $\theta\in{\rm LND}(\mathfrak{R})$ be given. Since $\mathfrak{R}\in\mathcal{C}(k)$, $\krn\theta=k[h]=k^{[1]}$ for some $h\in \mathfrak{R}$. 
By {\it Lemma\,\ref{derivations}}, there exists $\Theta\in {\rm LND}(\mathfrak{D})$ which extends $\theta$. 
Consequently, there exists $\alpha\in G$ and $P\in k[x_0]$ such that $\Theta = \alpha (P\delta^{\prime})\alpha^{-1}$
and $\krn\Theta=k[H]$ where $H=\alpha (x_0)$. 

Recall that $\mu\in{\rm Aut}_k(\mathfrak{D})$ is defined by $\mu =(-x_0,-x_1,-x_2)$, $\mu$ restricts to $\mathfrak{R}$ and $\mu\vert_{\mathfrak{R}}=id_{\mathfrak{R}}$. 
Since $^{\mu}h=h$ we have $k[h]\subset k[H]$. Therefore:
\[
k[h]\subset k[H]\cap k[^{\mu}H] \implies \dim ( k[H]\cap k[^{\mu}H])=1\implies k[H]=k[^{\mu}H] \implies ^{\mu}H=\pm H
\]
It follows that $h=H^2=\alpha (x_0)^2$ and $\mu\Theta\mu=\Theta$.

By definition of $G$, there exist $n\in\N$ and $f_1,\hdots ,f_n\in k[x_0]\setminus\{ 0\}$ such that:
\[
\alpha =\tau\Delta_{f_n}\tau\cdots\tau\Delta_{f_1}\tau
\]
Note that $\mu\tau\mu=\tau$ and $\mu\Delta_{f_i}\mu=\Delta_{^{\mu}f_i}$ for each $i$. The equality $\mu\Theta\mu=\Theta$ thus gives:
\[
 \tau\Delta_{f_n}\tau\cdots\tau\Delta_{f_1}\tau (P\delta' ) \tau\Delta_{-f_1}\tau\cdots\tau\Delta_{-f_n}\tau =
\tau\Delta_{^{\mu}f_n}\tau\cdots\tau\Delta_{^{\mu}f_1}\tau (^{\mu}P\delta' ) \tau\Delta_{-^{\mu}f_1}\tau\cdots\tau\Delta_{-^{\mu}f_n}\tau 
\]
Exponentiation gives
\[
 \tau\Delta_{f_n}\tau\cdots\tau\Delta_{f_1}\tau (\Delta_P) \tau\Delta_{-f_1}\tau\cdots\tau\Delta_{-f_n}\tau =
\tau\Delta_{^{\mu}f_n}\tau\cdots\tau\Delta_{^{\mu}f_1}\tau (\Delta_{^{\mu}P}) \tau\Delta_{-^{\mu}f_1}\tau\cdots\tau\Delta_{-^{\mu}f_n}\tau 
\]
which implies:
\[
1=\left( \Delta_{f_n}\tau\cdots\tau\Delta_{f_1}\tau \Delta_{-P} \tau\Delta_{-f_1}\tau\cdots\tau\Delta_{-f_n}\right)
\left( \Delta_{^{\mu}f_n}\tau\cdots\tau\Delta_{^{\mu}f_1}\tau \Delta_{^{\mu}P} \tau\Delta_{-^{\mu}f_1}\tau\cdots\tau\Delta_{-^{\mu}f_n}\right)
\]
According to the amalgamated free product structure of ${\rm Aut}_k(\mathfrak{D})$, 
we can never get 1 by multiplying nonidentity elements chosen alternately from the subgroups $\langle\tau\rangle$ and $\{ \Delta_f\, |\, f\in k[x]\}$;
see \cite{Wright.78}, Proposition 1. 
Therefore, 
\[
1=\Delta_{-f_n}\Delta_{^{\mu}f_n}=\Delta_{-f_n+^{\mu}f_n} \implies ^{\mu}f_n=f_n
\]
and inductively, $^{\mu}P=P$ and $f_i=^{\mu}f_i$ for each $i$. 
Therefore, $P,f_1,\hdots ,f_n\in k[x_0^2]$. This means that $\alpha$ restricts to $\mathfrak{R}$ and $\alpha\in\Gamma$. 
So $k[h]=k[\alpha (x_0^2)]=\alpha (k[x_0^2])$ and the theorem is proved. 
\end{proof}

\begin{corollary}\label{strong} $\Gamma$ acts transitively on ${\rm ILND}(\mathfrak{R})$.
\end{corollary}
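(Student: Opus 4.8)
I want to prove Corollary~\ref{strong}: the group $\Gamma$ acts transitively on $\mathrm{ILND}(\mathfrak{R})$. The ambient result, Theorem~\ref{transitivity}, already gives transitivity on $\mathrm{KLND}(\mathfrak{R})$, i.e.\ on \emph{kernels}. The gap between transitivity on kernels and strong transitivity (transitivity on irreducible derivations themselves) is exactly that two irreducible derivations sharing a kernel need not be equal — a priori they differ by a unit or by a nonzero constant multiple. So the whole content of the corollary is: once the kernels are matched up, the derivations themselves are matched up to an element of $\Gamma$.

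**The plan.** Let $\theta \in \mathrm{ILND}(\mathfrak{R})$ be arbitrary. By Theorem~\ref{transitivity} there is $\alpha \in \Gamma$ with $\alpha(\krn\theta) = k[x_0^2] = \krn\delta$. Replacing $\theta$ by the (still irreducible) conjugate $\alpha\theta\alpha^{-1}$, I may assume from the start that $\krn\theta = k[x_0^2] = \krn\delta$. It now suffices to show $\theta = c\,\delta$ for some $c \in k^*$, and then to absorb that constant $c$ into an element of $\Gamma$ — concretely via the torus $T \subset \Gamma$, just as in the proof of Lemma~\ref{D-trans}, where conjugating $\delta'$ by the diagonal automorphism $\gamma$ rescales it. So the two sub-steps are: (i) show $\theta$ and $\delta$ are proportional over $k$, and (ii) realize the scalar by a conjugation in $\Gamma$.

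**Carrying out step (i).** Since $\mathfrak{R} \subset \mathfrak{D}$ is integral, Lemma~\ref{derivations}(c) lets me lift $\theta$ uniquely to an irreducible $\Theta \in \mathrm{ILND}(\mathfrak{D})$ and lift $\delta$ to $\delta'$. By the argument already run in the proof of Lemma~\ref{derivations}(c), having $\krn\theta = k[x_0^2]$ forces $\krn\Theta = k[x_0]$; because $\mathfrak{D}$ has the strong transitivity property (Lemma~\ref{D-trans}), $\krn\Theta = \krn\delta'$ gives $\Theta = c\,\delta'$ with $c \in k^*$. Restricting to $\mathfrak{R}$ yields $\theta = \Theta|_{\mathfrak{R}} = c\,\delta$, which is step (i). (This is essentially a repackaging of the last paragraph of the proof of Lemma~\ref{derivations}, where the same reasoning is used to pin $\Theta$ down to $f\delta'$ and then force $f$ constant; here irreducibility of $\theta$ from the outset is what makes $f = c$ constant directly.)

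**Step (ii) and the anticipated obstacle.** To finish, I realize the scalar $c$ inside $\Gamma$. The torus $T \cong k^*$ sits in $PSL_2(k)$, and its action rescales $x_0^2, x_0x_1, x_1x_2, x_2^2$ by powers of a parameter $\lambda$; computing the induced conjugation $t_\lambda \,\delta\, t_\lambda^{-1}$ on the generators (using the weight-$4$ ladder in line~(\ref{delta})) shows it equals $\lambda^{m}\delta$ for a fixed nonzero exponent $m$. Since $k^*$ is divisible and I only need to hit a single nonzero value $c$, I can solve $\lambda^m = c$ over $\bar k$; the one subtlety is that for a general field $k$ an $m$-th root of $c$ may not lie in $k^*$, so I must check that the relevant weight $m$ is $\pm 1$ (or otherwise that the scaling is surjective onto $k^*$), making $T$ act transitively on positive-scalar multiples of $\delta$. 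This verification of the exact torus weight is the one genuinely computational point, and it is where I expect the only real friction: I must confirm the torus rescales $\delta$ by $\lambda^{\pm 1}$ rather than by a higher power, so that every $c \in k^*$ is attained. Granting that, $t_\lambda$ with $\lambda^{\pm1} = c$ conjugates $\delta$ to $\theta$, both $t_\lambda, \alpha \in \Gamma$, and transitivity on $\mathrm{ILND}(\mathfrak{R})$ follows.
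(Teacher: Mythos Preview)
Your argument is essentially the paper's: reduce via Theorem~\ref{transitivity} to $\krn\theta=k[x_0^2]$, lift through Lemma~\ref{derivations}(c) to obtain $\Theta=c\delta'$ and hence $\theta=c\delta$, then absorb the scalar $c$ by a diagonal conjugation. Step~(i) is identical to the paper's.

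The only divergence is in how step~(ii) is executed, and your caution there is warranted. If you parametrize the torus by the $SL_2(k)$-coordinate $\lambda$, the conjugation gives $t_\lambda\,\delta'\,t_\lambda^{-1}=\lambda^2\delta'$, so the weight is $2$, not $\pm 1$; over a non-closed $k$ this would not reach an arbitrary $c\in k^*$, and your hoped-for check fails. The paper sidesteps this entirely: rather than computing a weight, it directly writes down $\alpha=(cx_0,\,x_1,\,c^{-1}x_2)\in\mathrm{Aut}_k(\mathfrak{D})$ for the given $c\in k^*$, verifies $\alpha\,\delta'\,\alpha^{-1}=c\delta'=\Theta$, and observes that $\alpha$ restricts to $\mathfrak{R}$ and therefore conjugates $\delta$ to $\theta$. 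No root extraction is needed. In the paper's conventions this $\alpha|_{\mathfrak{R}}$ \emph{is} the element of $T$ with parameter $c$ (compare the proof of Proposition~\ref{generators}, where $\rho\in T$ is induced by $(-x_0,x_1,-x_2)$), so under that parametrization the weight is $1$ and your escape clause ``or otherwise that the scaling is surjective onto $k^*$'' is exactly what carries the day.
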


\begin{proof} Let $\theta\in {\rm ILND}(\mathfrak{R})$ be given. Since $\Gamma$ acts transitively on ${\rm KLND}(\mathfrak{R})$, by {\it Theorem\,\ref{transitive}}, 
we may, with no loss of generality, assume that $\krn\theta = k[x_0^2]$. 
By {\it Lemma\,\ref{derivations}(c)}, there exists unique $\Theta\in {\rm ILND}(\mathfrak{D})$ extending $\theta$. It follows that $\krn\Theta = k[x_0]$. 
Since $\mathfrak{D}$ has the strong transitivity property, $\Theta =c\delta'$ for some $c\in k^*$, 
and the automorphism $\alpha =(cx_0,x_1,c^{-1}x_2)$ conjugates $\delta'$ to $\Theta$. 
It follows that $\alpha$ restricts to $R$ and conjugates $\delta$ to $\theta$. 
\end{proof}

{\it Corollary\,\ref{strong}} means that $\mathfrak{R}$ has the strong transitivity property. Since $\delta\in {\rm ILND}(\mathfrak{R})$, $\krn\delta =k[x_0^2]$  and ${\rm pl}(\delta )=(x_0^2)$ we
see that the plinth invariant of $\mathfrak{R}$ equals $k[x_0^2]/(x_0^2)\cong_kk$.
This completes the proof of {\it Theorem\,\ref{transitive}}. 


\section{Proof of {\it Theorem\,\ref{aut-group}}}\label{aut-group-proof}

Define the {\bf triangular subgroup} $\mathcal{T}\subset {\rm Aut}_k(\mathfrak{R})$ to be the group of automorphisms preserving $\deg_{\delta}$.
Equivalently, $\alpha\in\mathcal{T}$ if and only if $\alpha (\krn (\delta^n))=\krn (\delta^n)$ for each $n\in\N$. 
\begin{proposition}\label{generators}
Let $T\cong k^*$ be the torus in $PSL_2(k)$ and define:
\[
E=\{ \exp (f\delta )\, |\, f\in k[x_0^2]\}\cong (k^{[1]},+)
\]
\begin{itemize}
\item [{\bf (a)}]  $\mathcal{T}=E\rtimes T$
\item [{\bf (b)}]  ${\rm Aut}_k(\mathfrak{R})$ is generated by $\mathcal{T}$ and $\tau\vert_{\mathfrak{R}}$. 
\end{itemize}
\end{proposition}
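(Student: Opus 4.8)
The plan is to dispatch part (b) first, since it requires only the transitivity already in hand, and then to bootstrap part (a) from a precise computation of the centralizer of $\delta$. Throughout write $Z(\delta)=\{\alpha\in{\rm Aut}_k(\mathfrak{R})\mid\alpha\delta=\delta\alpha\}$ for the centralizer of $\delta$, and note at once that $Z(\delta)\subseteq\mathcal{T}$: an $\alpha$ commuting with $\delta$ satisfies $\alpha(\krn(\delta^n))=\krn(\delta^n)$ for all $n$, hence preserves $\deg_{\delta}$.

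For part (b), take any $\alpha\in{\rm Aut}_k(\mathfrak{R})$. Since $\delta$ is irreducible, $\alpha\delta\alpha^{-1}\in{\rm ILND}(\mathfrak{R})$, so by {\it Corollary\,\ref{strong}} there is $\gamma\in\Gamma$ with $\gamma\delta\gamma^{-1}=\alpha\delta\alpha^{-1}$. Then $(\gamma^{-1}\alpha)\delta(\gamma^{-1}\alpha)^{-1}=\delta$, so $\gamma^{-1}\alpha\in Z(\delta)\subseteq\mathcal{T}$ and therefore $\alpha\in\Gamma\cdot\mathcal{T}$. Because $\Gamma$ is generated by $\{\Delta_f\vert_{\mathfrak{R}}\mid f\in k[x_0^2]\}=E$ together with $\tau\vert_{\mathfrak{R}}$, we have $\Gamma\subseteq\langle\mathcal{T},\tau\vert_{\mathfrak{R}}\rangle$, and part (b) follows. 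Note that this half needs only $Z(\delta)\subseteq\mathcal{T}$, not the sharper identity $Z(\delta)=E$.

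For part (a) I would first verify the routine inclusion $E\rtimes T\subseteq\mathcal{T}$: each $\exp(f\delta)\in E$ commutes with $\delta$ since $f\in\krn\delta$ gives $[f\delta,\delta]=0$; the torus $T$ scales $\delta$ and so preserves every $\krn(\delta^n)$; $T$ normalizes $E$ because it scales $x_0^2$, so ${}^{t}f\in k[x_0^2]$; and $E\cap T=1$ since a nontrivial element of $E$ is unipotent while $T$ is a torus. For the reverse inclusion I would introduce the homomorphism $c:\mathcal{T}\to k^*$ determined by $\alpha\delta\alpha^{-1}=c(\alpha)\delta$. This is well defined: for $\alpha\in\mathcal{T}$ the derivation $\alpha\delta\alpha^{-1}$ is again irreducible with kernel $k[x_0^2]$, and two irreducible locally nilpotent derivations with the same kernel are proportional by an element of $\mathfrak{R}^*=k^*$ (see \cite{Freudenburg.17}). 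Since $c$ restricts to an isomorphism $T\xrightarrow{\sim}k^*$ (as $\delta$ has nonzero weight for the $T$-action) and $\krn c=Z(\delta)$, we obtain $\mathcal{T}=Z(\delta)\rtimes T$, reducing part (a) to the identity $Z(\delta)=E$.

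To prove $Z(\delta)=E$, take $\beta\in Z(\delta)$; then $\beta$ restricts to an automorphism of $\krn\delta=k[x_0^2]$, say $\beta(x_0^2)=ax_0^2+b$. Applying $\beta$ to the local slice $x_0x_1$ (with $\delta(x_0x_1)=x_0^2$) shows $\beta(x_0^2)=\delta(\beta(x_0x_1))\in\delta\mathfrak{R}\cap\krn\delta={\rm pl}(\delta)=(x_0^2)$, forcing $b=0$. Writing $\beta$ on the generators $v_0=x_0^2,\,v_1=x_0x_1,\,v_2=x_0x_2+x_1^2,\,v_3=x_1x_2,\,v_4=x_2^2$ of the copy of $\V_4(k)$ in line (\ref{delta}), the same plinth argument applied to $\beta(v_1)=av_1+g_1$ gives $g_1\in(x_0^2)$, so after composing with a suitable element of $E$ I may assume $g_1=0$. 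The relation $3v_1^2=2v_0v_2-v_0$ then forces $\beta(v_2)=av_2+\tfrac{1-a}{2}$; but $\delta(\beta(v_3))=\beta(v_2)$ must lie in $\delta\mathfrak{R}$ while $\delta\mathfrak{R}\cap k\subseteq{\rm pl}(\delta)\cap k=0$, so the constant $\tfrac{1-a}{2}$ vanishes and $a=1$. Now $\beta$ fixes $x_0^2,x_0x_1,x_1^2,x_0x_2$, and the relations $v_1v_3=x_1^2\,x_0x_2$ and $v_3^2=x_1^2v_4$ force $\beta(v_3)=v_3$ and $\beta(v_4)=v_4$, so $\beta$ is the identity and the original $\beta$ lies in $E$. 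The step I expect to be the main obstacle is exactly the vanishing $a=1$: this is where the non-reduced fiber of $\pi_2$ ({\it Lemma\,\ref{fibers}(d)}) — equivalently, the absence of a slice for $\delta$ and the fact that ${\rm pl}(\delta)=(x_0^2)\neq(1)$, in contrast to the looser triangular behavior of $\mathfrak{D}$ — is what rigidifies $\mathcal{T}$ and pins $Z(\delta)$ down to $E$.
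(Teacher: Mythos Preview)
Your argument is correct. Part (b) is essentially the paper's proof: both conjugate $\alpha\delta\alpha^{-1}$ back using an element of $\Gamma$ and observe that the residual factor preserves the $\delta$-filtration; you invoke {\it Corollary\,\ref{strong}} while the paper invokes {\it Theorem\,\ref{transitivity}}, but the mechanism is identical.

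Part (a), however, takes a genuinely different route. The paper works directly with a general $\alpha\in\mathcal{T}$: it writes $\alpha$ on the generators $x_0^2,x_0x_1,x_1^2$, uses the relation $(x_0^2)(x_1^2)=(x_0x_1)^2$ in $k[x_0^2,x_1]\cong k^{[2]}$ to force $b=0$ and $b_0\in x_0^2k[x_0^2]$, then uses $\alpha(2x_0x_2-x_1^2)=1$ and a divisibility argument in $\mathfrak{D}$ to get $a_1^2=1$; finally it localizes at $x_0^2$, applies the Slice Theorem to obtain $\mathfrak{R}_{x_0^2}=L[x_0x_1]=L^{[1]}$, and reads off $\alpha=\exp(f\delta)$. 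Your approach is more structural: you introduce the character $c:\mathcal{T}\to k^*$ (well defined because two irreducible elements of ${\rm LND}(\mathfrak{R})$ with kernel $k[x_0^2]$ differ by a unit), split off $T$ via $c$, and reduce to $Z(\delta)=E$, which you then pin down entirely through plinth constraints ${\rm pl}(\delta)=(x_0^2)$ applied successively along the chain $v_0,\dots,v_4$ in line~(\ref{delta}). What your approach buys is a clean explanation of \emph{why} $\mathcal{T}$ is so small --- every obstruction is visibly the non-triviality of ${\rm pl}(\delta)$ --- and it avoids localization and the Slice Theorem. What the paper's approach buys is that it never needs to name $Z(\delta)$ or $c$, and the final localization step handles the higher generators $v_3,v_4$ in one stroke rather than via the relations $v_1v_3=x_1^2\cdot x_0x_2$ and $v_3^2=x_1^2v_4$ that you use.
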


\begin{proof}
Part (a). Let $\alpha\in\mathcal{T}$ be given. Since $\alpha$ is triangular, we have
\[
\alpha (x_0^2)=a_0x_0^2+b\,\, ,\,\, \alpha (x_0x_1)=a_1x_0x_1+b_0\,\, ,\,\, \alpha (x_1^2)=a_2x_1^2+b_1 
\]
where $a_i\in k^*$, $b\in k$, $b_0\in k[x_0^2]$ and $b_1\in k[x_0^2]+k[x_0^2]x_0x_1$. By composing with an element of $T$ we may assume that $a_0=1$. 
The relation $(x_0^2)(x_1^2)=(x_0x_1)^2$ yields:
\[
(x_0^2+b)(a_2x_1^2+b_1)-(a_1x_0x_1+b_0)^2=0
\]
The left side of this equation is in $k[x_0^2,x_1]\cong k^{[2]}$, so the coefficient of $x_1^2$ must be 0. We thus obtain:
\[
(a_2-a_1^2)x_0^2+ba_2=0 \,\,\text{in}\,\, k[x_0^2]\implies a_2-a_1^2=ba_2=0 \implies b=0 \implies \alpha (x_0^2)=x_0^2
\]
This leaves the equation $b_1x_0^2=(2a_1+b_0)b_0$, which shows that $\deg_{\delta}(b_1)=0$ and $b_1\in k[x_0^2]$. 
Therefore, $b_1x_0^2=(2a_1+b_0)b_0$ is an equation in $k[x_0^2]$. Since $a_1\in k^*$ we conclude that $b_0\in x_0^2k[x_0^2]$. 

Write $b_0=x_0^2\tilde{b}_0$ for $\tilde{b}_0\in k[x_0^2]$. Using that $\alpha (2x_0x_2-x_1^2)=1$ gives:
\begin{eqnarray*}
x_0^2 &=& x_0^2\,\alpha (2x_0x_2-x_1^2) \\
&=& 2x_0^2\alpha (x_0x_2)-\alpha (x_0x_1)^2 \\
&=& 2x_0^2\alpha (x_0x_2)-(a_1x_0x_1+b_0)^2 \\
&=& 2x_0^2\alpha (x_0x_2)-a_1^2x_0^2x_1^2-2a_1b_0x_0x_1-b_0^2
\end{eqnarray*}
Dividing by $x_0^2$ gives
\begin{eqnarray*}
 1 &=& 2\alpha (x_0x_2)-a_1^2x_1^2-2a_1\tilde{b}_0x_0x_1-x_0^2\tilde{b}_0^2 \\
 &=& 2\alpha (x_0x_2)-a_1^2(2x_0x_2-1)-2a_1\tilde{b}_0x_0x_1-x_0^2\tilde{b}_0^2 
\end{eqnarray*}
which shows that $2\alpha (x_0x_2)$ has the form $x_0P+ (1-a_1^2)$ for some $P\in \mathfrak{D}$. Squaring thus gives
$4x_0^2\alpha (x_2^2)=x_0Q+(1-a_1^2)^2$ for some $Q\in\mathfrak{D}$, so $(1-a_1^2)^2\in x_0\mathfrak{D}$. Since $a_1\in k^*$ this means $1-a_1^2=0$. 
Let $\rho\in T$ be induced by $(-x_0,x_1,-x_2)\in {\rm Aut}_k(\mathfrak{D})$. Then either:
\[
\alpha (x_0^2)=x_0^2 \,\,\text{and}\,\, \alpha (x_0x_1)=x_0x_1+b_0 \quad \text{or}\quad 
\rho\alpha (x_0^2)=x_0^2 \,\,\text{and}\,\, \rho\alpha (x_0x_1)=x_0x_1+\rho (b_0)
\]
We can thus assume, with no loss of generality, that $\alpha (x_0^2)=x_0^2$ and $\alpha (x_0x_1)=x_0x_1+b_0$. 

Let $S=\{ (x_0^2)^m\, |\, m\in\N\}$ and define:
\[
L=S^{-1}k[x_0^2] \quad \text{and}\quad \mathfrak{R}'=S^{-1}\mathfrak{R}
\]
Then $\delta$ and $\alpha$ extend to $\epsilon\in {\rm LND}(\mathfrak{R}')$ and $\beta\in {\rm Aut}_L(\mathfrak{R}')$, respectively. 
Since $\epsilon$ has slice $(x_0^2)^{-1}x_0x_1$, the Slice Theorem implies $\mathfrak{R}'=L[x_0x_1]=L^{[1]}$. 
Since $\beta (x_0x_1)=x_0x_1+b_0$ we can write $\beta =\exp (\Delta)$ for some $\Delta\in {\rm LND}(\mathfrak{R}')$.
Therefore, $\Delta\vert_{\mathfrak{R}}\in {\rm LND}(\mathfrak{R})$ and $\alpha=\exp (\Delta\vert_{\mathfrak{R}})$. In addition, $\krn\Delta\vert_{\mathfrak{R}}=\mathfrak{R}\cap L=k[x_0^2]$,
    which implies $\Delta\vert_{\mathfrak{R}}=f\delta$ for some $f\in k[x_0^2]$. So $\alpha=\exp (f\delta )$. This proves part (a).

Part (b). Let $\sigma\in {\rm Aut}_k(\mathfrak{R})$ be given. 
By {\it Theorem\,\ref{transitivity}}, there exists $\gamma\in\Gamma=\langle E,\tau\vert_{\mathfrak{R}}\rangle$ such that $\sigma\delta\sigma^{-1}=\gamma^{-1}\delta\gamma$. We thus have, for each $n\in\N$,
\[
\delta^n=(\gamma\sigma)^{-1}\delta^n (\gamma\sigma) \implies \krn \delta^n=\gamma\sigma (\krn \delta^n)\implies \gamma\sigma\in\mathcal{T}
\]
    It follows that ${\rm Aut}_k(\mathfrak{R})$ is generated by $\mathcal{T}$ and $\tau\vert_{\mathfrak{R}}$. 
\end{proof}

Given $\varepsilon\in E$, let $\varepsilon =\exp (f\delta)$ for $f\in k[x_0^2]$. Then $\varepsilon$ is the restriction of $\exp (f\delta )\in {\rm Aut}_k(\mathfrak{D})$. 
Similarly, every $\lambda\in k^*\subset {\rm Aut}_k(\mathfrak{R})$ is the restriction of $\Lambda\in k^*\subset {\rm Aut}_k(\mathfrak{D})$. 
Therefore, part (a) of {\it Proposition\,\ref{generators}} shows that every $\alpha\in\mathcal{T}$ is the restriction of some triangular element of ${\rm Aut}_k(\mathfrak{D})$,
and part (b) shows that every $\alpha\in {\rm Aut}_k(\mathfrak{R})$ is the restriction of some element of ${\rm Aut}_k(\mathfrak{D})$.
This proves part (a) of {\it Theorem\,\ref{aut-group}}. 

Part (b) of  {\it Theorem\,\ref{aut-group}} now follows from {\it Theorem\,\ref{Wright}}.
 

\section{Proof of {\it Theorem\,\ref{non-cancel}}}\label{non-cancel-proof}

We have $\mathfrak{R}=k[x,y,z,w]$ with fundamental pair $(\delta ,\upsilon )$ where:
\[
x=x_0^2\,\, ,\,\, y=x_0x_1\,\, ,\,\, z=x_1x_2\,\, ,\,\,w=x_2^2
\]
Define $P=x_0x_2=2xw-yz$. From {\it Section\,\ref{4-gen}} we obtain relations in $\mathfrak{R}$ 
\[
y^2=x(2P-1)\,\, ,\,\, yP=xz\,\, ,\,\, yz=P(2P-1)\,\, ,\,\, yw=zP\,\, ,\,\, z^2=w(2P-1)
\]
and $\delta$ operates on $\mathfrak{R}$ by:
\[
\delta w=2z\,\, ,\,\, \delta z=3P-1\,\, ,\,\, \delta P=y\,\, ,\,\, \delta y=x\,\, ,\,\, \delta x=0
\]
Note that $\delta (zP)=5P^2-2P$. 

Extend $\delta$ to $\Delta$ on $\mathfrak{R}[v]=\mathfrak{R}^{[1]}$ by $\Delta v=w$. 
Define $Q=5xv-yw$. Then $\Delta Q=2P$, 
so $\Delta s=1$ for $s=\frac{3}{2}Q -z$. Set $\tilde{R}=\krn\Delta$. By the Slice Theorem,
\[
\mathfrak{R}^{[1]}=\mathfrak{R}[v]=k[x,y,z,w,v]=k[x,y,s,w,v]=\tilde{\mathfrak{R}}[s]=\tilde{\mathfrak{R}}^{[1]}
\]
and
\[
\tilde{\mathfrak{R}}\cong_k\mathfrak{R}[v]/(s)=k[\bar{x},\bar{y},\bar{w},\bar{v}]
\]
where $2\bar{z}=3\bar{Q}$ and $2\bar{P}=4\bar{x}\bar{w}-3\bar{y}\bar{Q}$. 
This proves part (a) of {\it Theorem\,\ref{non-cancel}}.

Let $L=k[\bar{x},\bar{x}^{-1}]$ and observe that $\tilde{\mathfrak{R}}[\bar{x}^{-1}]=L[\bar{y},\bar{w},\bar{v}]=L[\bar{y},\bar{P},\bar{Q}]$. 
Since $3\bar{x}\bar{Q}=2\bar{y}\bar{P}$ and $\bar{x}(2\bar{P}-1)=\bar{y}^2$ we see that $\tilde{\mathfrak{R}}[\bar{x}^{-1}]=L[\bar{y}]=L^{[1]}$. Define $\Theta\in{\rm LND}(L[\bar{y}])$ 
by $\Theta \bar{y}=\bar{x}$. From the observed relations we find:
\[ \textstyle
\Theta \bar{P}=\bar{y}\,\, ,\,\, \Theta \bar{Q}=2\bar{P}-\frac{2}{3} \,\, ,\,\, \Theta \bar{w}= 3\bar{Q} \,\, ,\,\, \Theta \bar{v}=\bar{w}-\frac{2}{15}\bar{x}^{-1}
\]
Define $\theta=\bar{x}\,\Theta\in{\rm LND}(\tilde{\mathfrak{R}})$. Let $J=(\theta\tilde{\mathfrak{R}})$, the ideal generated by the image of $\theta$. Then
\[
J=(\theta \bar{x},\theta \bar{y},\theta \bar{w},\theta \bar{v})=(\bar{x}^2,\bar{x}\bar{Q},\bar{x}\bar{w}-\textstyle\frac{2}{15})
\]
Modulo $J$, we have:
\[
0\equiv \bar{x}\bar{Q}\equiv 5\bar{x}^2\bar{v}-\bar{x}\bar{y}\bar{w}\equiv\textstyle\frac{2}{15}\bar{y} \implies
0\equiv 2\bar{P}-\textstyle\frac{2}{3}\equiv 4\bar{x}\bar{w}-2\bar{y}\bar{z}-\frac{2}{3}\equiv -\frac{2}{15} \implies J=(1)
\]
Therefore, $\theta$ is irreducible and the $\G_a$-action on $\tilde{X}$ induced by $\theta$ is fixed-point free. 
We thus obtain:
\begin{enumerate}
\item $\theta\in{\rm LND}(\tilde{\mathfrak{R}})$ is irreducible.
\item $\krn\theta =k[\bar{x}]$
\item $\theta \bar{y}=\bar{x}^2$ and ${\rm pl}(\theta )=\bar{x}^2k[\bar{x}]$
\end{enumerate}
Since the plinth invariant of $\mathfrak{R}$ equals $k$, by {\it Theorem\,\ref{transitive}}, there is no $D\in{\rm ILND}(\mathfrak{R})$ such that $\krn D/{\rm pl}(D)\cong k[x]/(x^2)$. Therefore,
$\tilde{\mathfrak{R}}\not\cong_k\mathfrak{R}$.

In addition, $\tilde{\mathfrak{R}}$ is $\Z$-graded with $\bar{x},\bar{y},\bar{w},\bar{v}$ homogeneous and $\deg (\bar{x},\bar{y},\bar{w},\bar{v})=(2,1,-2,-3)$. 
The derivation $\theta$ is homogeneous and $\deg\theta =3$.
This grading induces a $\G_m$-action on $\tilde{X}$ which semi-commutes with the $\G_a$-action. 
This proves parts (b) and (c) of {\it Theorem\,\ref{non-cancel}}.

For part (d), $\tilde{\mathfrak{R}}\in\mathcal{C}(k)$ follows from part(a) and {\it Lemma\,\ref{cylinders}} below. 
Part (a) also shows that $\mathfrak{R}$ and $\tilde{\mathfrak{R}}$ have the same class group. 

This completes the proof of {\it Theorem\,\ref{non-cancel}}.


\section{Proof of {\it Theorem\,\ref{plinth-invariant}}}\label{plinth-invariant-proof}

By {\it Lemma\,\ref{D-trans}}, $B_1=\mathfrak{D}$ has the strong transitivity property. Since $\delta'\in {\rm ILND}(B_1)$ we see that the plinth invariant of 
$B_1$ is $k[x]/(x)=k$. 

Consider $B_n$ for $n\ge 2$, where $x^nz-y^2-1=0$. It was shown by Makar-Limanov \cite{Makar-Limanov.01a} that $ML(B_n)=k[x]\cong k^{[1]}$. 
So each $D\in {\rm LND}(B)$ has $\krn D=k[x]$, which implies $B_n\in\mathcal{C}_1(k)$ and $B_n$ has the transitivity property. 
The proof that $B_n$ has the strong transitivity property is almost identical to the proof of {\it Lemma\,\ref{D-trans}}, and is left to the reader. 

Define $\omega\in {\rm ILND}(B_n)$ by $\omega =x^n\partial_y+2y\partial_z$. Then $\krn\omega =k[x]$ and ${\rm pl}(\omega )=(x^n)$, which 
implies that the plinth invariant of $B_n$ is $k[x]/(x^n)$. 

This proves {\it Theorem\,\ref{plinth-invariant}}. 



\section{Three lemmas for the class $\mathcal{C}(k)$}

Recall the conditions for membership in $\mathcal{C}(k)$.
\begin{itemize}
\item [(a)] $B$ is a normal affine $k$-domain, $\dim B=2$, $B^*=k^*$ and $ML(B)\ne B$.
\item [(b)] $\krn D\cong_kk^{[1]}$ for every nonzero $D\in {\rm LND}(B)$. 
\end{itemize}

\begin{lemma}\label{C(k)} Suppose that the ring $B$ satisfies condition {\rm (a)}. If one of the conditions
\begin{enumerate}[label=(\roman*)]
\item $k=\bar{k}$ and $B$ is rational.
\item $k=\bar{k}$ and $B$ is a UFD.
\item $ML(B)=k$
\end{enumerate}
holds, then $B\in\mathcal{C}(k)$. 
\end{lemma}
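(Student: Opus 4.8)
The plan is to fix an arbitrary nonzero $D\in{\rm LND}(B)$ and prove $\krn D\cong_kk^{[1]}$; since hypothesis {\rm (a)} is assumed throughout, establishing condition {\rm (b)} in this way is exactly what is needed to conclude $B\in\mathcal{C}(k)$. Write $A=\krn D$. Because $A$ is factorially closed in $B$, one has $A^{*}=B^{*}\cap A=k^{*}$, and $A$ is a normal affine $k$-domain of dimension $1$, i.e. the coordinate ring of a smooth affine curve $C={\rm Spec}(A)$. The Slice Theorem supplies $f\in A\setminus\{0\}$ and a local slice $s$ with $B_f=A_f[s]$, so that ${\rm frac}(B)={\rm frac}(A)(s)$ is purely transcendental of degree one over ${\rm frac}(A)$; I will use this to transfer rationality to $A$. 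The core of the argument is the following dichotomy over $\bar k$: a smooth affine curve $C$ with $A^{*}=k^{*}$ satisfies $A\cong_kk^{[1]}$ as soon as either ${\rm frac}(A)$ is rational over $k$ or ${\rm Pic}(C)=0$. To prove it, let $\bar C$ be the smooth projective model and $\bar C\setminus C=\{P_1,\dots,P_r\}$ with $r\ge 1$; the standard exact sequence relating $\mathcal{O}(C)^{*}/k^{*}$ to the degree-zero divisors on $\bar C$ supported at the $P_i$ and to ${\rm Jac}(\bar C)$ shows that triviality of the units forces the comparison map $\Z^{\,r-1}\to{\rm Jac}(\bar C)$ to be injective. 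If ${\rm frac}(A)$ is rational then ${\rm Jac}=0$, whence $r=1$; if ${\rm Pic}(C)=0$ then this map is also surjective, so ${\rm Jac}(\bar C)\cong\Z^{\,r-1}$ is both divisible and free, hence zero and again $r=1$. In either case $\bar C=\PP^1$ with one point removed, so $A\cong_kk^{[1]}$.

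With this reduction in hand I would dispatch the first two cases quickly. For {\rm (i)}, rationality of $B$ gives ${\rm frac}(B)=k(t_1,t_2)$, and since ${\rm frac}(A)\subseteq{\rm frac}(B)$ has transcendence degree one over $k=\bar k$, L\"uroth's theorem (a unirational curve over an algebraically closed field is rational) shows ${\rm frac}(A)$ is rational; the dichotomy then finishes the case. For {\rm (ii)}, the kernel of a locally nilpotent derivation on a UFD is again a UFD (a factorially closed subalgebra of a UFD with the same units is a UFD; see \cite{Freudenburg.17}), so the one-dimensional normal domain $A$ is a UFD, hence a PID, hence ${\rm Pic}(C)=0$, and again the dichotomy applies.

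Case {\rm (iii)} is where the real work lies, because there $k$ is not assumed algebraically closed and the dichotomy cannot be invoked directly; my plan is to reduce to $\bar k$ by base change. Extending scalars, each $D\in{\rm LND}(B)$ induces $D_{\bar k}\in{\rm LND}(B_{\bar k})$ with $\krn(D_{\bar k})=A\otimes_k\bar k$, and since $\bar k$ is flat over $k$ and passing to $\bar k$ can only enlarge the supply of locally nilpotent derivations, $ML(B_{\bar k})\subseteq ML(B)\otimes_k\bar k=\bar k$, so $ML(B_{\bar k})=\bar k$. Over $\bar k$ I would then prove directly that ${\rm frac}(A\otimes_k\bar k)$ is rational: because $ML(B_{\bar k})=\bar k$ there is $U\in{\rm LND}(B_{\bar k})$ with $U(A\otimes_k\bar k)\ne 0$, and a general orbit of the associated $\G_a$-action is a copy of $\A_{\bar k}^1$ on which the quotient morphism to ${\rm Spec}(A\otimes_k\bar k)$ is non-constant; thus this curve is dominated by $\A_{\bar k}^1$, its smooth model is dominated by $\PP^1$, and so has genus zero. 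The dichotomy yields $A\otimes_k\bar k\cong_{\bar k}\bar k^{[1]}$. Finally I descend: this exhibits ${\rm Spec}(A)$ as a $k$-form of $\A^1$, and in characteristic zero every such form is trivial, so $A\cong_kk^{[1]}$.

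The step I expect to be the main obstacle is precisely case {\rm (iii)}, namely producing rationality over the non-closed field. The orbit argument must be carried out after base change and then descended, and care is needed to justify that $\krn$ commutes with the field extension and that a genus-zero curve with trivial units descends to $\A_k^1$; here characteristic zero is essential, since it guarantees that the unique place at infinity is a smooth rational, Galois-fixed point and that $\A^1$ has no nontrivial forms. By contrast, cases {\rm (i)} and {\rm (ii)} reduce cleanly to the curve dichotomy, whose proof via the compactification is the one remaining computation of substance.
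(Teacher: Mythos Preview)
Your argument is correct. For cases (i) and (ii) you and the paper proceed along the same lines --- L\"uroth plus the classification of smooth affine curves with trivial units --- though you prove the underlying curve dichotomy explicitly via the Jacobian exact sequence while the paper simply cites \cite{Freudenburg.17}, Lemmas 2.8 and 2.9. The real divergence is in case (iii): the paper dispatches it in one line by invoking Lemma 2.14 of \cite{Kolhatkar.11}, whereas you base-change to $\bar k$, use an orbit of a second $\G_a$-action to dominate ${\rm Spec}(A\otimes_k\bar k)$ by $\A^1_{\bar k}$, and then descend using triviality of forms of $\A^1$ in characteristic zero. Your route is self-contained and geometrically transparent, but it costs you several verifications the paper's citation sidesteps: that $B_{\bar k}$ is a domain (this uses $B^*=k^*$, which forces $k$ to be algebraically closed in ${\rm frac}(B)$), that $B_{\bar k}$ is normal (regularity of $k\to\bar k$ in characteristic zero), and that $(A\otimes_k\bar k)^*=\bar k^*$ (which you need for the dichotomy, and which follows from $ML(B_{\bar k})=\bar k$ since units lie in every LND kernel). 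These are all true, but they should be stated explicitly, since without them the dichotomy cannot be applied after base change.
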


\begin{proof} Let $D\in {\rm LND}(B)$, $D\ne 0$, and set $A=\krn D$. 

Assume that (i) holds. The existence of a local slice for $D$ implies ${\rm frac}(B)\cong {\rm frac}(A)^{(1)}$. 
By hypothesis, ${\rm frac}(B)\cong k^{(2)}$, so by L\"uroth's Theorem we see that ${\rm frac}(A)\cong k^{(1)}$. In addition, $A$ is normal (since $B$ is normal) and $A^*=k^*$. 
Therefore, ${\rm Spec}(A)$ is a smooth rational affine curve with trivial units, i.e., ${\rm Spec}(A)\cong\A_k^1$. 

Assume that condition (ii) holds. Then $A$ is a UFD; see \cite{Freudenburg.17}, Lemma 2.8. Since $\dim A=1$ and $A^*=k^*$, it follows that $A\cong k^{[1]}$; see
\cite{Freudenburg.17}, Lemma 2.9. 

Assume that condition (iii) holds. Then $A\cong k^{[1]}$; see Lemma 2.14 of \cite{Kolhatkar.11}.
\end{proof} 

\begin{lemma}\label{cylinders} Assume that $k=\bar{k}$ and let $B_1\in\mathcal{C}(k)$. If $B_2$ is a $k$-domain such that $B_1^{[1]}\cong_kB_2^{[1]}$ then $B_2\in\mathcal{C}(k)$.
\end{lemma}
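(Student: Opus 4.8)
The plan is to transfer each defining property of $\mathcal{C}(k)$ from $B_1$ to $B_2$, using the hypothesis $B_1^{[1]}\cong_k B_2^{[1]}$ to match up geometric and algebraic invariants. Condition (a) splits into four clauses: normality, dimension two, trivial units, and $ML\neq B$. Dimension is immediate, since $\dim B_i = \dim B_i^{[1]} - 1$ and the two cylinders are isomorphic. Normality of $B_2$ follows because a polynomial ring $B^{[1]}$ is normal if and only if $B$ is normal, and $B_1^{[1]}\cong_k B_2^{[1]}$ is normal because $B_1$ is. For trivial units I would use that the units of a polynomial ring over a domain $B$ are exactly the units of $B$, so $B_2^* = (B_2^{[1]})^* = (B_1^{[1]})^* = B_1^* = k^*$.

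The next step is the Makar-Limanov condition $ML(B_2)\neq B_2$. The cleanest route is to invoke that $B_1\in\mathcal{C}(k)$ gives a nonzero locally nilpotent derivation on $B_1$, and hence one on $B_1^{[1]}\cong_k B_2^{[1]}$; one then needs a nonzero locally nilpotent derivation on $B_2$ itself. Here I would use the standard fact that for a two-dimensional affine $k$-domain $B$, one has $ML(B^{[1]})=ML(B)$ when $ML(B)\neq B$ — or more robustly, that a nontrivial $\G_a$-action on $B_2^{[1]}$ forces a nontrivial $\G_a$-action on $B_2$ in this setting. Since the isomorphism $B_1^{[1]}\cong_k B_2^{[1]}$ carries the condition $ML\neq$(whole ring) across, and $ML$ behaves predictably under adding one variable, I would deduce $ML(B_2)\neq B_2$.

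For condition (b) I would lean on {\it Lemma\,\ref{C(k)}} rather than prove it directly. Once (a) is established for $B_2$, it suffices to verify one of the three sufficient conditions (i)–(iii) of that lemma. Since $k=\bar{k}$ by hypothesis, condition (i) — rationality — is the natural target: $B_2$ is rational because ${\rm frac}(B_2)^{(1)}\cong {\rm frac}(B_2^{[1]})\cong {\rm frac}(B_1^{[1]})\cong {\rm frac}(B_1)^{(1)}\cong k^{(3)}$ (using that $B_1\in\mathcal{C}(k)$ is rational, as guaranteed by condition (b)$'$ in the introduction), and then L\"uroth in the form ${\rm frac}(B_2)\cong k^{(2)}$ follows since a field finitely generated of transcendence degree two over $\bar{k}$ that becomes purely transcendental after adjoining one variable is itself purely transcendental. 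Applying {\it Lemma\,\ref{C(k)}}(i) then yields $\krn D\cong_k k^{[1]}$ for every nonzero $D\in{\rm LND}(B_2)$, which is condition (b).

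I expect the main obstacle to be the $ML$-invariant clause of condition (a), namely ruling out the possibility that the extra variable in the cylinder creates locally nilpotent derivations on $B_2^{[1]}$ that do not descend to genuine locally nilpotent derivations on $B_2$. The subtlety is that $ML$ is not in general preserved under adding a variable, so I would need either a dimension-specific result (the preservation of $ML$ for surfaces under cylinder formation, in the case $ML\neq B$) or an explicit argument producing a nonzero $D\in{\rm LND}(B_2)$ from the abundance of $\G_a$-actions on the isomorphic cylinders. The rationality transfer, by contrast, I expect to be routine once the field-of-fractions bookkeeping is set up carefully.
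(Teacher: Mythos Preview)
Your outline is close to the paper's proof, but two points deserve sharpening.

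For the $ML$ clause, your first formulation is circular: you cannot invoke ``$ML(B^{[1]})=ML(B)$ when $ML(B)\ne B$'' for $B=B_2$, since $ML(B_2)\ne B_2$ is precisely what you are trying to establish. The paper runs the clean contrapositive: suppose $ML(B_2)=B_2$; then by \cite{Freudenburg.17}, Theorem~2.24, $ML(B_2^{[1]})=B_2$. But $ML(B_2^{[1]})=ML(B_1^{[1]})\subseteq ML(B_1)$, and since $B_1\in\mathcal{C}(k)$ one has $\dim ML(B_1)\le 1$, contradicting $\dim B_2=2$. Your second formulation (``a nontrivial $\G_a$-action on $B_2^{[1]}$ forces one on $B_2$'') is the right idea but left unjustified; the cited theorem is exactly what makes it work.

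For condition~(b), your route via {\it Lemma\,\ref{C(k)}}(i) is valid but heavier than necessary, and what you call ``L\"uroth in the form ${\rm frac}(B_2)\cong k^{(2)}$'' is not L\"uroth at all---it is Castelnuovo's theorem that unirational surfaces over $\bar k$ are rational. The paper avoids this: given nonzero $D\in{\rm LND}(B_2)$, the kernel $\krn D$ is a one-dimensional normal domain with trivial units, and its fraction field embeds in ${\rm frac}(B_2^{[1]})={\rm frac}(B_1^{[1]})\cong k^{(3)}$, so $\krn D$ is unirational as a \emph{curve}. Genuine L\"uroth (for curves) then gives ${\rm frac}(\krn D)\cong k^{(1)}$, whence $\krn D\cong_k k^{[1]}$. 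This bypasses both the surface-level rationality of $B_2$ and the appeal to {\it Lemma\,\ref{C(k)}}.
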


\begin{proof} The hypotheses imply that $B_2$ is a normal affine $k$-domain, $\dim B_2=2$, and $B_2^*=k^*$. If $ML(B_2)=B_2$ then $ML(B_2^{[1]})=B_2$  (see \cite{Freudenburg.17}, Theorem 2.24).
We thus have
\[
B_2=ML(B_2^{[1]})=ML(B_1^{[1]})\subseteq ML(B_1) \implies 2=\dim B_2\le \dim ML(B_1)\le 1
\]
which is a contradiction. Therefore, $ML(B_2)\ne B_2$ and $B_2$ satisfies condition (a). 

For condition (b), let $D\in {\rm LND}(B_2)$ be given, $D\ne 0$. 
Since $B_1$ is rational, $B_1^{[1]}=B_2^{[1]}$ is rational. By L\"uroth's Theorem we see that $\krn D$ is rational. 
So ${\rm Spec}(\krn D)$ is a normal (smooth) rational affine curve with trivial units, which means ${\rm Spec}(\krn D)\cong\A_k^1$ and $\krn D\cong_kk^{[1]}$. 
\end{proof}

\begin{lemma}\label{TP-STP} Suppose that $B\in\mathcal{C}(k)$ has the transitivity property. 
\begin{itemize} 
\item [{\bf (a)}] Given $D,E\in {\rm ILND}(B)$, there exists $\alpha\in{\rm Aut}_k(B)$ and $c\in k^*$ such that $\alpha E\alpha^{-1}=cD$. 
\item [{\bf (b)}] If $k=\bar{k}$ and $B$ admits a nontrivial $\Z$-grading then has the strong transitivity property. 
\end{itemize}
\end{lemma}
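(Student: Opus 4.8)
The plan is to treat the two parts in sequence, using part (a) as the main input to part (b).

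For part (a), I would first invoke the transitivity property: there is $\alpha\in{\rm Aut}_k(B)$ with $\alpha(\krn E)=\krn D$, and $\alpha E\alpha^{-1}$ is again irreducible, so after replacing $E$ by $\alpha E\alpha^{-1}$ I may assume $\krn E=\krn D=:A$. Membership in $\mathcal{C}(k)$ gives $A\cong_kk^{[1]}$, so $A$ is a PID. The key step is the standard fact that two nonzero locally nilpotent derivations with a common kernel are proportional over that kernel: choosing a local slice $r$ for $D$ and localizing at $t_0:=Dr\in A$, the Slice Theorem gives $B_{t_0}=A_{t_0}[r]$, on which $D=t_0\,\partial_r$ and $E=h\,\partial_r$ for some $h\in B_{t_0}$, where local nilpotence of $E$ forces $h\in A_{t_0}$; clearing denominators yields nonzero $a,b\in A$ with $bE=aD$. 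After dividing by $\gcd_A(a,b)$ I may assume $a,b$ are coprime in $A$, so $ua+vb=1$ for some $u,v\in A$. Then for every $x\in B$,
\[
D(x)=(ua+vb)D(x)=u\,(aD(x))+vb\,D(x)=b\big(uE(x)+vD(x)\big)\in bB,
\]
so $D(B)\subseteq bB$; irreducibility of $D$ forces $b\in k^*$, and symmetrically $a\in k^*$. Hence $E=cD$ with $c=a/b\in k^*$, which is the assertion.

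For part (b), the point is to upgrade the ``up to a scalar'' conjugacy of part (a) to genuine conjugacy. Given $D,E\in{\rm ILND}(B)$, part (a) produces $\alpha$ and $c$ with $\alpha E\alpha^{-1}=cD$, so it suffices to realize every scaling $D\mapsto cD$ by an automorphism; moreover, since by part (a) every irreducible derivation is conjugate up to a scalar to a single fixed $D_0$, it is enough to show that $cD_0$ is conjugate to $D_0$ for all $c\in k^*$. Here the nontrivial $\Z$-grading $B=\bigoplus_dB_d$ enters through its associated $\G_m$-action $t_\lambda$, which acts on $B_d$ by $\lambda^d$. If $D_0$ is homogeneous of degree $e$, a direct check gives $t_\lambda D_0 t_\lambda^{-1}=\lambda^e D_0$; when $e\ne 0$, the hypothesis $k=\bar{k}$ guarantees that $\lambda\mapsto\lambda^e$ is surjective onto $k^*$, so every scaling $cD_0$ is conjugate to $D_0$. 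Chasing the scalars back through part (a) then shows $E$ is conjugate to $D$, which is the strong transitivity property.

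It remains to produce a homogeneous irreducible derivation $D_0$ of nonzero degree, and this is the step I expect to be the main obstacle. I would start from any $D\in{\rm ILND}(B)$ (which exists because $ML(B)\ne B$), decompose it into finitely many homogeneous components $D=D_r+\cdots+D_s$ for the grading, and pass to a leading component, using the standard fact that the top (or bottom) homogeneous component of a locally nilpotent derivation is again locally nilpotent; dividing out its content makes it irreducible and keeps it homogeneous. The delicate point is to arrange a nonzero degree $e$, i.e. to rule out that every homogeneous irreducible derivation has degree $0$ (equivalently, that every locally nilpotent derivation commutes with the Euler derivation of the grading). I would argue this is incompatible with a \emph{nontrivial} grading on $B\in\mathcal{C}(k)$: a degree-zero irreducible $D_0$ has graded kernel $A=k[t]$ and, by the Slice Theorem over $A_t$, a homogeneous local slice whose $D_0$-image lies in $A$; tracking degrees shows that either all algebra generators lie in $B_0$ (contradicting nontriviality of the grading) or the localized operator $t^{-j}D_0$ descends to a genuine derivation of $B$, forcing $D_0$ to be reducible. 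In either case a homogeneous irreducible derivation of nonzero degree exists, completing part (b).
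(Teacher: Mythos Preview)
Your argument for part (a) is correct and is essentially the paper's: reduce to a common kernel via transitivity, obtain $aD=bE$ with $a,b$ in the kernel $A\cong k^{[1]}$, and use irreducibility together with a B\'ezout identity to force $a,b\in k^*$. The paper writes the last step as $E=a(gD-fE)$ (so $a\in B^*=k^*$ by irreducibility of $E$, then $b\in k^*$ by irreducibility of $D$), while you show $D(B)\subseteq bB$; these are equivalent manipulations.

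For part (b) your strategy again coincides with the paper's: fix a homogeneous irreducible $D_0$ and use the $\G_m$-action $\lambda_t$ coming from the grading to get $\lambda_tD_0\lambda_t^{-1}=t^eD_0$, so that over $k=\bar{k}$ every $cD_0$ is conjugate to $D_0$ once $e\ne 0$. The paper obtains such a $D_0$ by citing \cite{Freudenburg.17}, Principle~15 and Theorem~2.15, and argues no further. You correctly identify $e\ne 0$ as the crux and attempt a direct proof, but the dichotomy you propose does not hold. Take $B=k[x,y]\in\mathcal{C}(k)$ with $\deg(x,y)=(1,0)$: then $D_0=\partial_y$ is homogeneous of degree $0$, irreducible, with kernel $k[x]$ and slice $y$; yet $x\notin B_0$ (so ``all generators in $B_0$'' fails) and $x^{-j}\partial_y$ is not a derivation of $B$ for any $j\ge 1$ (so ``$t^{-j}D_0$ descends to $B$'' fails). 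Thus degree-zero homogeneous irreducible LNDs do occur, and your sketch, which in effect tries to exclude them, cannot succeed. What is actually required is only the existence of \emph{some} homogeneous irreducible LND of nonzero degree (in the example, $\partial_x$), not the nonexistence of degree-zero ones; your argument does not establish this, so part (b) has a genuine gap precisely at the step you yourself flagged as delicate.
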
 

\begin{proof} Part (a). Set $A=\krn D\cong k^{[1]}$. By hypothesis, there exists $\alpha\in{\rm Aut}_k(B)$ such that $\krn (\alpha E\alpha^{-1})=A$. 
We may thus assume, with no loss of generality, that $\krn E=A$. By \cite{Freudenburg.17}, Principle 12, there exist $a,b\in A$ such that $aD=bE$. We may assume that $\gcd_A(a,b)=1$.
Let $f,g\in A$ be such that $af+bg=1$. We have:
\[
gaD=gbE=(1-af)E \implies E=a(gD-fE)
\]
Since $E$ is irreducible as a derivation it follows that $a\in k^*$. Irreducibility of $D$ now implies $b\in k^*$. So $E=cD$ for some $c\in k^*$. 

Part (b). Let $B=\bigoplus_{d\in\Z}B_d$ be a nontrivial $\Z$-grading and let $\G_m\to {\rm Aut}_k(B)$, $t\to \lambda_t$, be the induced
$\G_m$-action. There exists homogeneous $D\in {\rm ILND}(B)$; see \cite{Freudenburg.17}, Principle 15 and Theorem 2.15. 
If $d=\deg D$ then $\lambda_tD\lambda_t^{-1}=t^dD$ for all $t\in k^*$. Since $k=\bar{k}$ we see that $D$ and $cD$ are conjugate for all $c\in k^*$. 
Combined with part (a), this proves part (b). 
\end{proof}


\section{Additional notes and Questions}\label{last}

\subsection{Extensions of group actions}

Assume that $k=\C$. {\it Theorem\,\ref{non-extend}} shows that the $SL_2(\C )$-action on $X$ does not extend to $\A_{\C }^4$. 
Examples of non-extension of a group action were previously known, but this appears to be the first for $SL_2(\C )$. 
Following is an example for the torus $\C^*$. 

Suppose that $n\in\N$, $p(y)\in\C [y]$, $\deg p(y)\ge 2$, and $r(x)\in \C [x]$, $\deg r(x)\le n-1$ and $r(0)=1$. Define the sequence $d_m\in\N$ by $d_0=d_1=1$ and $d_{m+1}=3d_m-d_{m-1}$. 
Define hypersurfaces in $U,U',,V_n,V_n^{\prime},W_m\subset \A_{\C}^3$ by:
\begin{eqnarray*}
U&:& x^2z = y^2-1 \\ U'&:& x^2z = (1+x)y^2-1 \\ V_n&:& x^nz = p(y) \\ V_n^{\prime}&:& x^nz = r(x)p(y) \\ W_m&:& xz = y^{d_m}-1
\end{eqnarray*}
In \cite{Freudenburg.Moser-Jauslin.03} the author and Moser-Jauslin show the following.
\begin{enumerate}
\item $U$ and $U'$ are algebraically non-isomorphic but there is a holomorphic automorphism of $\C^3$ which carries $U$ to $U'$.
\item For each $n\ge 2$, $V_n\cong _kV_n^{\prime}$ but there is no algebraic automorphism of $\A_{\C}^3$ which carries $V_n$ to $V_n^{\prime}$. 
\item In case $p(y)=y^2-1$ and $r(x)=1+x$ there is, for each $n\ge 2$, a holomorphic automorphism of $\C^3$ which carries $V_n$ to $V_n^{\prime}$. 
\item For each $m\ge 3$, there exists a hypersurface $W_m^{\prime}\subset\A_{\C}^3$ such that $W_m\cong _kW_m^{\prime}$ 
but there is no algebraic automorphism of $\A_{\C}^3$ which carries $W_m$ to $W_m^{\prime}$. 
\end{enumerate}
Given $n\ge 2$, the $\C^*$-action on $\A_{\C}^3$ defined by $t\cdot (x,y,z)=(tx,y,t^{-n}z)$ restricts to a nontrivial $\C^*$ action on $V_n$. 
Therefore, $V_n^{\prime}$ has a nontrivial algebraic $\C^*$-action. 
In \cite{Dubouloz.Poloni.09}, Dubouloz and Poloni show that, when $r(x)=1-x$ and $p(y)$ has simple roots, the $\C^*$-action on $V_n^{\prime}$ 
does not extend algebraically to $\A_k^3$, but does extend holomorphically to $\C^3$. 
So the $\C^*$-action extends to $\A_{\C}^3$ for one embedding $V_n$, but not for the other embedding $V_n^{\prime}$. 
In contrast, {\it Theorem\,\ref{non-extend}} shows that the $SL_2(\C )$-action on $X$ fails to extend to $\A_{\C}^4$ for {\it all} embeddings of $X$ in $\A_{\C}^4$. 

We ask the following.
\begin{question} 
Are all algebraic embeddings of $Y$ in $\A_{\C}^3$ equivalent?
\end{question}

\begin{question} 
Are all algebraic embeddings of $X$ in $\A_{\C}^4$ equivalent?
\end{question}

\begin{question} 
Can the $SL_2(\C )$-action on $X$ be extended holomorphically to $\C^4$? 
\end{question}

\subsection{The exponential closure of a group of automorphisms}
The automorphism groups for $k^{[2]}$, $\mathfrak{D}$ and $\mathfrak{R}$ are structurally quite similar. A closer look at these similarities motivates the following definition. 
Assume that $B$ is an affine $k$-domain. 

\begin{definition} Let $H\subset{\rm Aut}_k(B)$ be a subgroup. 
The {\bf exponential closure} of $H$ is the subgroup generated by $H$ and the set of exponential automorphisms:
\[
\{ \exp (fD)\in{\rm Aut}_k(B)\, |\, D\in {\rm LND}(B)\, ,\, \exp (D)\in H\, ,\, f\in{\rm frac}(\krn D)\}
\]
\end{definition}
For each of the the rings $k^{[2]}$, $\mathfrak{D}$ and $\mathfrak{R}$, its automorphism group is the exponential closure of its linear subgroup:
${\rm Aut}_k(k^{[2]})$ is the exponential closure of $GL_2(k)$,
${\rm Aut}_k(\mathfrak{D})$ is the exponential closure of $O_3(k)$, and 
${\rm Aut}_k(\mathfrak{R})$ is the exponential closure of $PSL_2(k)$.

Linear $\G_a$-actions on the polynomial ring $k^{[n]}$ are in bijective correspondence with $SL_2$-modules of dimension $n$; see \cite{Freudenburg.17}, Proposition 6.3. 
For $k^{[2]}$, there is only one nontrivial $SL_2$-module, namely, $\V_1(k)$, so all linear $\G_a$-actions on $k^{[2]}$ are conjugate by a linear automorphism. 
For $k[x,y,z]=k^{[3]}$ there are two distinct possibilities, $\V_2(k)$ and $\V_0(k)\oplus\V_1(k)$. 
The associated locally nilpotent derivations are $D_2=x\partial_y+y\partial_z$ and $D_0+D_1=x\partial_y$. 
The exponential closure of $GL_3(k)$ in ${\rm Aut}_k(k^{[3]})$ thus contains every triangular automorphism, and therefore contains the tame subgroup, and also contains 
non-tame automorphisms such as the Nagata automorphism $\exp (fD_2)$, $f=2xz-y^2$. The homogeneous $(2,5)$ derivation $D$ of $k^{[3]}$ is irreducible, and it seems unlikely that $\exp (D)$ belongs
to the exponential closure of $GL_3(k)$; see \cite{Freudenburg.17}, Section 5.4. 



\vspace{.2in}

\noindent \address{Department of Mathematics\\
Western Michigan University\\
1903 W. Michigan Ave.\\
Kalamazoo, Michigan 49008} \,\,USA\\
\email{gene.freudenburg@wmich.edu}

\end{document}